\documentclass[12pt]{article}

\usepackage[paper=letterpaper,margin=0.7in,twoside=false,includehead]{geometry}

\usepackage{amsfonts,amsmath,amsthm, amssymb} 
 
\usepackage{hyperref}
\usepackage[capitalize]{cleveref}
\usepackage[T1]{fontenc}
\usepackage{mathtools}
\usepackage{verbatim}
\usepackage{caption}

\DeclarePairedDelimiter\floor{\lfloor}{\rfloor}

\DeclarePairedDelimiter\abs{|}{|}
\DeclarePairedDelimiter\parens{(}{)}
\DeclarePairedDelimiter\set{\{}{\}}

\usepackage{tikz,tkz-berge}

\DeclareDocumentCommand{\K}{O{t}}{\mathcal{K}^{#1}}

\DeclareMathOperator{\CT}{CT}

\newtheorem{thm}{Theorem}[section]
\newtheorem{lem}[thm]{Lemma}
\newtheorem{cor}[thm]{Corollary}
\newtheorem{prop}[thm]{Proposition}

\theoremstyle{definition}
\newtheorem{defn}[thm]{Definition}
\newtheorem{problem}{Problem}

\newcommand{\D}{\Delta}
\newcommand{\cH}{\mathcal{H}}
\newcommand{\T}{\mathrm{T}}
\newcommand{\N}{\mathcal{N}}
\newcommand{\w}{\omega}
\newcommand{\dom}{\mathrm{dom}}
\newcommand{\Dom}{\mathrm{Dom}}
\newcommand{\ex}{\mathrm{ex}}
\newcommand{\mex}{\mathrm{mex}}

\begin{document}

\pagestyle{plain}

\title{Maximizing subgraph density in graphs of bounded degree and clique number}
\author{R. Kirsch}
\date{August 14, 2025}

\maketitle
 
\abstract{
We asymptotically determine the maximum density of subgraphs isomorphic to $H$, where $H$ is any graph containing a dominating vertex, in graphs $G$ on $n$ vertices with bounded maximum degree and bounded clique number. That is, we asymptotically determine the constant $c=c(H,\D,\w)$ such that $\ex(n,H,\set{K_{1,\D+1},K_{\w+1}})=(1-o_n(1))cn$ where $\w$ is sufficiently large. 

Following recent interest in the corresponding parameter $\mex(m,H,F)$ where where we fix the number of edges $m$ instead of the number of vertices $n$ of the graph, we determine the asymptotics of $\mex(m,H,\set{K_{1,1,\D+1},K_{\w+1}})$ when $H$ has at least two dominating vertices.

We obtain these results via a uniform proof of a common technical generalization of both, where we fix the number of $u$-cliques in the graph.  
This general result may be of independent interest.
 
Then we localize these results, proving a tight inequality involving the sizes of the locally largest cliques and complete split graphs. 
}

\section{Introduction}

The generalized Tur\'{a}n number $\ex(n,H,F)$ is the maximum number of subgraphs isomorphic to $H$ in a graph $G$ that has $n$ vertices and does not contain $F$ as a subgraph. More generally, $\ex(n,H,\mathcal{F})$ for a set of graphs $\mathcal{F}$ is the maximum number of subgraphs isomorphic to $H$ in a graph $G$ that has $n$ vertices and contains none of the graphs $F \in \mathcal{F}$ as a subgraph. 

An early result on maximizing the number of copies of a subgraph $H \ne K_2$ is due to Zykov, who in 1949 determined the maximum number of $K_t$'s in a graph not containing any $K_{\w+1}$. We write $k^t(G)$ for the number of subgraphs of $G$ isomorphic to $K_t$.

\begin{thm}[Zykov \cite{Zykov}]\label{thm:zykov}
    For every $n \ge 1$, $\w \ge 1$, and $t \ge 2$, if $G$ is an $n$-vertex, $K_{\w+1}$-free graph, then $\ex(n,K_t,K_{\w+1}) = k^t(\T_{\w}(n))$, where $\T_{\w}(n)$ is the $\w$-partite Tur\'an graph.
\end{thm}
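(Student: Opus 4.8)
The plan is to prove this by Zykov symmetrization: I would first show that some extremal graph may be taken to be complete multipartite, and then optimize over the part sizes to recover the Tur\'an graph.

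Fix an $n$-vertex $K_{\w+1}$-free graph $G$ maximizing $k^t(G)$ (a maximizer exists since there are finitely many graphs on $n$ vertices), and for a vertex $x$ write $c(x)$ for the number of copies of $K_t$ containing $x$. For a non-adjacent pair $u,v$, let the \emph{clone} $G_{u\to v}$ be obtained by deleting all edges at $u$ and joining $u$ to exactly $N(v)$. Partitioning the copies of $K_t$ into those meeting neither of $u,v$, only $u$, or only $v$ (none meet both, as $uv\notin E$), a direct count gives $k^t(G_{u\to v})=k^t(G)+c(v)-c(u)$, since after the operation $u,v$ are non-adjacent twins and copies through $u$ are in bijection with copies through $v$. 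The clone preserves $K_{\w+1}$-freeness, as every clique meets at most one of $u,v$ and a clique through $u$ corresponds to one through $v$ in $G$ of the same size. Because $G$ is extremal and both $G_{u\to v}$ and $G_{v\to u}$ are admissible, this forces $c(u)=c(v)$ for every non-adjacent pair, so such clones are ``free.''

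Next I would reduce to the complete multipartite case. Let $v$ maximize $c(\cdot)$ and set $W=\set{v}\cup\set{x:xv\notin E}$, so $V\setminus W=N(v)$. Cloning $v$ onto every vertex of $W$ simultaneously makes $W$ an independent set with common neighborhood $N(v)$, turning $G$ into the join $\overline{K_{\abs{W}}}\vee G[N(v)]$. This does not decrease $k^t$: writing $p=c(v)$ for the number of $K_{t-1}$'s in $N(v)$, the new graph has exactly $\abs{W}\cdot p$ copies meeting $W$, whereas in $G$ the copies meeting $W$ number at most $\sum_{x\in W}c(x)\le \abs{W}\cdot p$ (each $c(x)\le c(v)=p$), and copies avoiding $W$ are unchanged; $K_{\w+1}$-freeness is preserved as before. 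Thus we may assume $G=\overline{K_{\abs{W}}}\vee G[N(v)]$ with $G[N(v)]$ being $K_{\w}$-free. Since $k^t(\overline{K_s}\vee H)=k^t(H)+s\,k^{t-1}(H)$, applying the same reduction to $G[N(v)]$ (now $K_{\w}$-free, on fewer vertices) and iterating — an induction on $\w$ — shows that some extremal graph is complete multipartite with at most $\w$ parts.

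Finally I would optimize the part sizes. For a complete $r$-partite graph with parts of sizes $n_1,\dots,n_r$ ($r\le\w$, $\sum n_i=n$), a copy of $K_t$ is exactly a transversal $t$-subset of the parts, so $k^t=e_t(n_1,\dots,n_r)$, the degree-$t$ elementary symmetric polynomial. It then remains to show that $e_t$ is maximized, subject to $\sum n_i=n$ with at most $\w$ parts, by $r=\min(\w,n)$ balanced parts, i.e.\ by $\T_{\w}(n)$. I expect this balancing step to be the main obstacle: one checks that using $r=\w$ is never worse (splitting a part in two only adds copies) and that for fixed $r$, replacing sizes $(n_i,n_j)$ with $n_i>n_j+1$ by $(n_i-1,n_j+1)$ strictly increases $e_t$ (equivalently, $e_t$ is Schur-concave), so the maximum occurs at the balanced partition. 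Care is also needed to make the symmetrization of the previous paragraph terminate in a complete multipartite graph, but the convexity/smoothing inequality for $e_t$ is the crux. Combining the steps gives $k^t(G)\le k^t(\T_{\w}(n))$, and since $\T_{\w}(n)$ is itself $K_{\w+1}$-free the bound is tight, yielding $\ex(n,K_t,K_{\w+1})=k^t(\T_{\w}(n))$.
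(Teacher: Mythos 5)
The paper does not prove this statement: it is Zykov's classical theorem, quoted with a citation, so there is no in-paper proof to compare against. Your argument is the standard Zykov symmetrization proof, and it is sound in outline. The clone count $k^t(G_{u\to v})=k^t(G)+c(v)-c(u)$, the preservation of $K_{\w+1}$-freeness, the conclusion $c(u)=c(v)$ for non-adjacent pairs, and the simultaneous cloning of a $c$-maximizing vertex $v$ onto $W=\set{v}\cup\set{x:xv\notin E}$ are all correct as you state them. The one step that needs to be tightened is the recursion: after reducing to $G=\overline{K_{\abs{W}}}\vee G[N(v)]$ you invoke $k^t(\overline{K_s}\vee H)=k^t(H)+s\,k^{t-1}(H)$ and apply the inductive hypothesis to the $K_{\w}$-free graph $H=G[N(v)]$, but replacing $H$ by a better graph must not decrease $k^t(H)$ \emph{and} $k^{t-1}(H)$ simultaneously, so an induction that fixes a single $t$ does not close. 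The standard fix is to strengthen the induction to all clique sizes at once (the Tur\'an graph $\T_{\w-1}(n-\abs{W})$ maximizes $k^{t'}$ for every $t'$ among $K_{\w}$-free graphs on that many vertices, which is exactly the statement being proved one level down), after which the replacement is safe and the choice of $\abs{W}$ is absorbed into the balancing step. That balancing step, which you correctly identify as the remaining work, is routine: writing $k^t=e_t(n_1,\dots,n_r)$ and using $e_t=n_in_j\,e_{t-2}(\text{rest})+(n_i+n_j)\,e_{t-1}(\text{rest})+e_t(\text{rest})$, the move $(n_i,n_j)\mapsto(n_i-1,n_j+1)$ for $n_i>n_j+1$ weakly increases $e_t$ since $(n_i-1)(n_j+1)>n_in_j$, and splitting a part weakly increases $e_t$ as well, so the maximum over at most $\w$ parts is attained by $\T_{\w}(n)$. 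With the strengthened induction made explicit, the proof is complete.
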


In 2008 Frohmader proved a more detailed form of Zykov's theorem: if we fix the number of edges $m$ instead of the number of vertices $n$, then the maximum number of $K_t$'s in a $K_{\w+1}$-free graph is achieved by $\CT_{\w}(m)$, the $\w$-partite colex Tur\'an graph \cite{Frohmader08}. That is, $\mex(m,K_t,K_{\w+1})=k^t(\CT_{\w}(m))$, where $\mex(m,H,F)$ is the maximum number of copies of $H$ in an $F$-free graph on $m$ edges. The colex Tur\'an graphs interpolate between Tur\'an graphs, and Zykov's theorem follows from Frohmader's and Tur\'an's theorems.

Many other authors have considered generalized Tur\'an problems with $H=K_t$, including when $F$ is a star. Wood proved that $\ex(n,K_t,K_{1,\D+1}) = (1+o_n(1))k^t(\floor{\frac{n}{\D+1}}K_{\D+1})$ \cite{Wood07}. Several years later Chase \cite{Chase20} proved 
the following more precise theorem, 
building on foundational work of Gan, Loh, and Sudakov  
\cite{GLS}. A different proof 
was given by Chao and Dong \cite{CD22}. 
\begin{thm}[Chase \cite{Chase20}]\label{thm:CGLS}
    Let $n$ and $\D$ be positive integers and $t \ge 3$. Then $\ex(n,K_t,K_{1,\D+1}) = k^t(aK_{\D+1}\cup K_b)$, 
    where $a$ and $b$ are defined by $n=a(\D+1)+b$ and $0 \le b < \D+1$.
\end{thm}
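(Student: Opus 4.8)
The plan is to reduce the problem to a clean optimization over disjoint unions of cliques, and then to isolate the one genuinely hard structural input. Note first that being $K_{1,\D+1}$-free is exactly the condition that every vertex of $G$ has degree at most $\D$. Suppose for the moment that some extremal $G$ is a disjoint union of cliques $K_{s_1}\cup\cdots\cup K_{s_r}$; the degree bound then forces $s_i\le\D+1$, and $k^t(G)=\sum_i\binom{s_i}{t}$. It remains to maximize $\sum_i\binom{s_i}{t}$ subject to $\sum_i s_i=n$ and $s_i\le\D+1$. This is a one-line convexity argument: if $s_i\ge s_j$ with $s_i\le\D$, then replacing the pair $(s_i,s_j)$ by $(s_i+1,s_j-1)$ changes the sum by $\binom{s_i}{t-1}-\binom{s_j-1}{t-1}\ge0$. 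Thus it never hurts to move a vertex into a larger, not-yet-full clique, and the optimum fills as many cliques as possible to size $\D+1$, leaving a single remainder $K_b$. This produces exactly $aK_{\D+1}\cup K_b$, so everything hinges on justifying the reduction to disjoint cliques.

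A second, softer ingredient is the local counting identity obtained by summing over vertices the number of $t$-cliques through each:
\[
t\,k^t(G)=\sum_{v}k^{t-1}\!\left(G[N(v)]\right)\le n\binom{\D}{t-1},
\]
using $|N(v)|\le\D$. Since $t\binom{\D+1}{t}=(\D+1)\binom{\D}{t-1}$, this matches $k^t(aK_{\D+1})$ precisely when $(\D+1)\mid n$, settling the case $b=0$. One can also observe that every vertex of $G[N(v)]$ has at most $\D-1$ neighbors inside $N(v)$ (its edge to $v$ points outside $N(v)$), so $G[N(v)]$ is $K_{1,\D}$-free and $k^{t-1}(G[N(v)])\le\ex(d_v,K_{t-1},K_{1,\D})$; this tempts one into an induction reducing $t$-cliques to $(t-1)$-cliques with a smaller degree bound. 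However, I expect this route to fall short on its own: summing the per-vertex bounds independently and pushing all degrees to $\D$ only recovers the lossy estimate $n\binom{\D}{t-1}$, which overshoots the truth by $b\bigl(\binom{\D}{t-1}-\binom{b-1}{t-1}\bigr)$ when $b\ne0$. The deficit reflects a global obstruction---one cannot have every vertex simultaneously of degree $\D$ with a clique neighborhood unless $(\D+1)\mid n$---that a purely local count cannot see.

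The main obstacle, and the real content of the theorem, is therefore proving that an extremal graph may be taken to be a disjoint union of cliques. The approach I would pursue is a compression/local-switching argument: take an extremal $G$ containing a component that is not complete, locate a non-edge $uw$ whose endpoints share a neighbor, and reroute edges so as to enlarge the clique structure around $uw$ without decreasing $k^t$. The difficulty---where essentially all the work lives---is that the degree cap $\D$ and the greedy switches fight each other: moving an edge to create a missing adjacency typically forces some vertex above degree $\D$, so the modification must be chosen globally and its effect on $k^t$ tracked through a careful inequality rather than a single local swap. I would expect to spend nearly all of the effort controlling this interaction and pinning down the tight remainder term $b$, and to treat $t=3$ as a separate base case, since at the edge level $t=2$ the extremal graphs are near-regular rather than unions of cliques (for instance $C_5$ beats $K_3\cup K_2$). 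This is precisely the program carried out for triangles by Gan, Loh, and Sudakov and completed for all $t$ by Chase.
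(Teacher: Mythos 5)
This statement is quoted in the paper as a known result of Chase (building on Gan--Loh--Sudakov), with no proof supplied, so the only question is whether your argument stands on its own. It does not: there is a genuine gap at exactly the point you identify. Your reduction of the optimization over disjoint unions of cliques is correct (the convexity swap $\binom{s_i}{t-1}-\binom{s_j-1}{t-1}\ge 0$ does show the maximizer is $aK_{\D+1}\cup K_b$), and your local count $t\,k^t(G)=\sum_v k^{t-1}(G[N(v)])\le n\binom{\D}{t-1}$ genuinely settles the case $(\D+1)\mid n$. But for $b\ne 0$ everything rests on the claim that an extremal $K_{1,\D+1}$-free graph may be taken to be a disjoint union of cliques, and for that claim you offer only a program (``compression/local-switching, track the effect on $k^t$ globally'') together with an explicit deferral to Gan--Loh--Sudakov and Chase. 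That structural claim \emph{is} the theorem; no switching argument is actually exhibited, no inequality is proved, and you yourself flag that naive local swaps break the degree cap. As written, the proposal proves the statement only when $\D+1$ divides $n$, and otherwise reduces it to an unproved assertion equivalent to the result being proved.

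Two smaller points. First, your suggested induction on $t$ via $k^{t-1}(G[N(v)])\le\ex(d_v,K_{t-1},K_{1,\D})$ cannot close the gap for the reason you note, but it is worth recording that Gan, Loh, and Sudakov did prove a clean reduction in the other direction: the case $t=3$ of the statement implies all $t\ge 3$. So a correct writeup would isolate $t=3$ as the irreducible core rather than treating it as ``a separate base case'' of the same induction. Second, your observation that $t=2$ genuinely fails (e.g.\ $C_5$ versus $K_3\cup K_2$ for $\D=2$, $n=5$) is correct and explains the hypothesis $t\ge 3$, which is a useful sanity check but does not substitute for the missing argument.
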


Combining the constraints of forbidding large cliques and large stars---i.e., bounding both the clique number and the maximum degree---Radcliffe and the present author in 2020 initiated the study of $\ex(n,K_t,\set{K_{1,\D+1},K_{\w+1}})$ for $t \ge 2$ in \cite{KR20}. We determined upper and lower bounds on the constant $c=c(t,\D,\w)$ such that $\ex(n,K_t,\set{K_{1,\D+1},K_{\w+1}}) = (1-o_n(1))cn$. These bounds on $c$ are equal and achieved by $\T_{\w}(\frac{\D\w}{\w-1})$ when $\D$ is a multiple of $\w-1$, and are asymptotically equivalent as $\D$ increases without bound. The value of $c$ is not always given by the lower bound graph $\T_{\w}(\D+\floor{\frac{\D}{\w-1}})$. In fact we found explicit examples where different graphs maximize the number of $K_t$'s for different values of $t$; e.g., $\ex(42, K_3, \set{K_{1,6},K_5}) = k^3(6\CT_4(17))$, 
while $\ex(42, K_4, \set{K_{1,6},K_5}) = k^4(7\T_4(6))$. 

\cref{table:graphs} shows the extremal graphs for $\ex(n,K_t,\emptyset)$ and $\ex(n,K_t,K_{\w+1})$ and the lower bound graphs for $\ex(n,K_t,K_{1,\D+1})$ and $\ex(n,K_t,\set{K_{1,\D+1},K_{\w+1}})$.

\begin{table}[h]
\centering
\begin{tabular}{c|c|c}
 & Unrestricted & $K_{\w+1}$-free \\\hline
 Unrestricted  & 
    $K_n$ &$\T_{\w}(n)$\\
 	&
 	          \begin{tikzpicture}[scale=.7]
				\draw[fill=gray!20] (2,2) circle [radius=1.8];
			\end{tikzpicture}
 &

            \tikzstyle{vx}=[inner sep=1.5pt,circle,fill=black,draw=black]
			\tikzstyle{edge}=[thick]
			\begin{tikzpicture}[scale=.7]

             \draw[gray!20, line width=9pt] (3,3) -- (1,3);
                    \draw[gray!20, line width=9pt] (3,3) -- (3,1);
                    \draw[gray!20, line width=9pt] (3,3) -- (1,1);
                    \draw[gray!20, line width=9pt] (1,3) -- (3,1);
                    \draw[gray!20, line width=9pt] (1,3) -- (1,1);
                    \draw[gray!20, line width=9pt] (3,1) -- (1,1);
                    
				\draw[rotate around={-45:(1,3)}, fill=white] (1,3) ellipse [x radius=.5, y radius=.9];
				\draw[rotate around={45:(1,1)}, fill=white] (1,1) ellipse [x radius=.5, y radius=.9];
				\draw[rotate around={45:(3,3)}, fill=white] (3,3) ellipse [x radius=.5, y radius=.9];
				\draw[rotate around={-45:(3,1)}, fill=white] (3,1) ellipse [x radius=.5, y radius=.9];
                
			\end{tikzpicture}
 
 \\\hline
 $K_{1,\D+1}$-free & 
 $\floor{\frac{n}{\D+1}}K_{\D+1}$ &  $\floor{\frac{n}{\D+\floor{\D/(\w-1)}}}\T_{\w}(\D+\floor{\frac{\D}{\w-1}})$\\

    &
 	          \begin{tikzpicture}[scale=.4]
				\draw[fill=gray!20] (2,2) circle [radius=1.8];
			\end{tikzpicture}
            \begin{tikzpicture}[scale=.4]
				\draw[fill=gray!20] (2,2) circle [radius=1.8];
			\end{tikzpicture}
            \begin{tikzpicture}[scale=.4]
				\draw[fill=gray!20] (2,2) circle [radius=1.8];
			\end{tikzpicture}
 &

 \tikzstyle{vx}=[inner sep=1.5pt,circle,fill=black,draw=black]
			\tikzstyle{edge}=[thick]
			\begin{tikzpicture}[scale=.4]

             \draw[gray!20, line width=6pt] (3,3) -- (1,3);
                    \draw[gray!20, line width=6pt] (3,3) -- (3,1);
                    \draw[gray!20, line width=6pt] (3,3) -- (1,1);
                    \draw[gray!20, line width=6pt] (1,3) -- (3,1);
                    \draw[gray!20, line width=6pt] (1,3) -- (1,1);
                    \draw[gray!20, line width=6pt] (3,1) -- (1,1);
                    
				\draw[rotate around={-45:(1,3)}, fill=white] (1,3) ellipse [x radius=.5, y radius=.9];
				\draw[rotate around={45:(1,1)}, fill=white] (1,1) ellipse [x radius=.5, y radius=.9];
				\draw[rotate around={45:(3,3)}, fill=white] (3,3) ellipse [x radius=.5, y radius=.9];
				\draw[rotate around={-45:(3,1)}, fill=white] (3,1) ellipse [x radius=.5, y radius=.9];
                
			\end{tikzpicture}\tikzstyle{vx}=[inner sep=1.5pt,circle,fill=black,draw=black]
			\tikzstyle{edge}=[thick]
			\begin{tikzpicture}[scale=.4]

             \draw[gray!20, line width=6pt] (3,3) -- (1,3);
                    \draw[gray!20, line width=6pt] (3,3) -- (3,1);
                    \draw[gray!20, line width=6pt] (3,3) -- (1,1);
                    \draw[gray!20, line width=6pt] (1,3) -- (3,1);
                    \draw[gray!20, line width=6pt] (1,3) -- (1,1);
                    \draw[gray!20, line width=6pt] (3,1) -- (1,1);
                    
				\draw[rotate around={-45:(1,3)}, fill=white] (1,3) ellipse [x radius=.5, y radius=.9];
				\draw[rotate around={45:(1,1)}, fill=white] (1,1) ellipse [x radius=.5, y radius=.9];
				\draw[rotate around={45:(3,3)}, fill=white] (3,3) ellipse [x radius=.5, y radius=.9];
				\draw[rotate around={-45:(3,1)}, fill=white] (3,1) ellipse [x radius=.5, y radius=.9];
                
			\end{tikzpicture}\tikzstyle{vx}=[inner sep=1.5pt,circle,fill=black,draw=black]
			\tikzstyle{edge}=[thick]
			\begin{tikzpicture}[scale=.4]

             \draw[gray!20, line width=6pt] (3,3) -- (1,3);
                    \draw[gray!20, line width=6pt] (3,3) -- (3,1);
                    \draw[gray!20, line width=6pt] (3,3) -- (1,1);
                    \draw[gray!20, line width=6pt] (1,3) -- (3,1);
                    \draw[gray!20, line width=6pt] (1,3) -- (1,1);
                    \draw[gray!20, line width=6pt] (3,1) -- (1,1);
                    
				\draw[rotate around={-45:(1,3)}, fill=white] (1,3) ellipse [x radius=.5, y radius=.9];
				\draw[rotate around={45:(1,1)}, fill=white] (1,1) ellipse [x radius=.5, y radius=.9];
				\draw[rotate around={45:(3,3)}, fill=white] (3,3) ellipse [x radius=.5, y radius=.9];
				\draw[rotate around={-45:(3,1)}, fill=white] (3,1) ellipse [x radius=.5, y radius=.9];
                
			\end{tikzpicture}\\
&& (asymptotically)
\end{tabular}
\captionsetup{width=.8\linewidth}
\caption{Classes of $n$-vertex graphs for which the maximum number of $t$-cliques is known. Gray circles represent complete graphs, and gray bands indicate complete multipartite graphs.
}
\label{table:graphs}
\end{table}

In the last two years, both of the first two problems were extended to maximize the number of copies of a graph $H$ which is not necessarily $K_t$. Morrison, Nir, Norin, {Rz\textogonekcentered{a}$\dot{z}$ewski}, and Wesolek \cite{MNNRW23} proved that for every graph $H$, the Tur\'an graph has the maximum number of subgraphs isomorphic to $H$ among $K_{\w+1}$-free graphs for large enough $\w$. The following theorem is a version\footnote{Note that $\w_0(H)$ is not necessarily equal to what in \cite{MNNRW23} is called $r_0(H)$: for $\w_0(H)$ we require the equation in \cref{thm:turangood} to hold for all $n$, while for $r_0(H)$ it only needs to hold for sufficiently large $n$.} of the impressive main result in \cite{MNNRW23}. We write $\N(H,G)$ for the number of subgraphs isomorphic to $H$ in the graph $G$.

\begin{thm}[Morrison et al. \cite{MNNRW23}]\label{thm:turangood}
	Let $H$ be a graph. There exists $\w_0(H)$ such that, if $\w \ge \w_0(H)$, then for all $n$, $\ex(n,H,K_{\w+1}) = \N(H,\T_{\w}(n))$. Moreover, $\w_0(H) \le 300v(H)^9$.
\end{thm}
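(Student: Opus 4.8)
The plan is to reduce to complete multipartite host graphs and then solve the resulting finite optimization, tracking error terms carefully enough to pin down a polynomial threshold for $\w$. Fix $n$; the case $n \le \w$ is immediate because then $\T_{\w}(n) = K_n$, which trivially contains the maximum number of copies of every $H$, so assume $n > \w$ and let $G$ be an $n$-vertex $K_{\w+1}$-free graph maximizing $\N(H,G)$. I would first show that $G$ may be taken complete multipartite via Zykov symmetrization. For a non-adjacent pair $u,v$, write $\N(H,G) = a + b + c + d$ according to whether a copy of $H$ meets neither of $u,v$, only $u$, only $v$, or both; cloning $u$ onto $v$ produces $a + 2b + d'$ and cloning $v$ onto $u$ produces $a + 2c + d''$, where $d',d''$ again count copies through the (still non-adjacent) pair. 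The point is that $d,d',d''$ are copies pinned through a prescribed pair of non-adjacent vertices, hence of lower order than the single-vertex terms $b,c$ once $G$ is dense, which an extremal $G$ must be when $\w$ is large; handling ties with some care, one of the two clonings does not decrease $\N(H,\cdot)$, and iterating reaches a complete multipartite graph with at most $\w$ parts.

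Next I would make the optimization explicit. If the parts are $V_1,\dots,V_s$ with $\abs{V_i}=n_i$ and $s \le \w$, then a copy of $H$ is an injective map whose part-assignment is a proper coloring of $H$, so
\[
\N(H, K_{n_1,\dots,n_s}) \;=\; \frac{1}{\abs{\mathrm{Aut}(H)}}\sum_{c}\ \prod_{i=1}^{s}(n_i)_{\abs{c^{-1}(i)}},
\]
where the sum is over proper colorings $c\colon V(H)\to[s]$ and $(n)_k = n(n-1)\cdots(n-k+1)$. The goal is to show that, subject to $\sum_i n_i = n$ and $s \le \w$, this is maximized by the balanced partition into exactly $\w$ parts, i.e.\ by $\T_{\w}(n)$.

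I would establish this through two exact smoothing moves. First, splitting an oversized part into two never decreases the sum, so one may assume $s=\w$; at leading order this is visible from the normalized coefficient $P(H,\w)/\w^{v(H)} = 1 - e(H)/\w + O(\w^{-2})$ (with $P$ the chromatic polynomial and $e(H)$ the number of edges), which strictly increases in the number of parts. Second, among $\w$-part partitions, moving a vertex from a larger part to a smaller one never decreases the sum, a discrete convexity (Schur-concavity) statement whose positivity is driven by the same leading behavior. Together these moves transport any complete multipartite competitor to $\T_{\w}(n)$ without decreasing $\N(H,\cdot)$, which would finish the proof.

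The main obstacle is making both moves hold \emph{exactly, for every $n$}, rather than merely asymptotically, and extracting the bound $\w_0(H) \le 300\,v(H)^9$. For large $n$ the leading $n^{v(H)}$ coefficient already forces balance and $\w$ parts, and the falling-factorial corrections $(n_i)_k - n_i^{\,k}$ are genuinely lower order; the trouble is small and moderate $n$, where those corrections are comparable to the gain and can, for small $\w$, reverse the preference for balance (this is exactly the phenomenon the paper flags once a degree bound is imposed). The crux is thus a quantitative inequality requiring the per-move gain—positive but only of size controlled by $e(H)$ and $1/\w$—to strictly dominate the corrections uniformly in $n$; bounding the relevant colorings and corrections by polynomials in $v(H)$ and $e(H) \le \binom{v(H)}{2}$ and demanding strictness is what pushes $\w$ past a polynomial threshold, with a careful accounting yielding $300\,v(H)^9$. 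As a sanity check, for $H=K_t$ every proper coloring is injective, the corrections behave monotonically, and the argument needs no lower bound on $\w$, recovering \cref{thm:zykov}; the entire difficulty lies in accommodating colorings of $H$ that reuse colors.
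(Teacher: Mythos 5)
This statement is quoted from Morrison--Nir--Norin--Rz\k{a}\.zewski--Wesolek \cite{MNNRW23}; the paper you are reading offers no proof of it, so your attempt can only be judged on its own terms, and on those terms it has a fatal gap at the very first step. Zykov symmetrization, as you describe it, does not work for general $H$: when $u$ and $v$ are non-adjacent, copies of $H$ may pass through both of them (unlike copies of $K_t$), so the identity $\N(H,G)=a+b+c+d$ does not yield $\max(a+2b+d',\,a+2c+d'')\ge a+b+c+d$ without genuinely controlling $d,d',d''$, and your justification that these terms are ``of lower order'' is a heuristic that cannot deliver an \emph{exact} equality valid for every $n$, including small $n$ where the extremal graph need not be dense. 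Worse, the conclusion you want from symmetrization --- that some extremal graph is complete multipartite --- is simply false without a lower bound on $\w$: for $H=C_5$ and $\w=2$ the maximizer of $\N(C_5,G)$ among triangle-free graphs is the balanced blow-up of $C_5$, not a bipartite graph. So the largeness of $\w$ must enter the argument already at the symmetrization stage, not merely in the later balancing step, and your sketch gives no mechanism for that.

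The second half has the same character: you assert that splitting a part and moving a vertex from a larger part to a smaller one ``never decreases the sum,'' verify this only at leading order in $n$ via the chromatic-polynomial expansion, and then explicitly flag the uniform-in-$n$ version as ``the crux'' without resolving it. That crux, together with extracting any explicit threshold (let alone $300\,v(H)^9$), is essentially the entire content of the theorem; the actual proof in \cite{MNNRW23} is a substantially more delicate argument and does not reduce to these two smoothing moves. As written, your proposal is a reasonable plan of attack whose two load-bearing steps are each either unproved or false as stated, so it does not constitute a proof.
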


In contrast to the exact answer provided in \cref{thm:turangood}, the strongest statement on $\ex(n,H,$ $K_{1,\D+1})$ that is known for all graphs $H$ is that, for all graphs $H$ and trees $T$, $\ex(n,H,T) = \Theta(n^{r(H,T)})$ for an integer $r(H,T)$ defined based on $T$-free partial blowups of $H$ \cite{Letzter19}. 
More precise results on $\ex(n,H,K_{1,\D+1})$ are known for only a few classes of graphs $H$. The case where $H$ is a clique is fully understood (see \cref{thm:CGLS}) and has extremal graphs which are disjoint complete graphs. If $H$ is a star, then all $\D$-regular graphs are extremal. When $H$ is a path, a counting argument shows that the upper bound $\ex(n,P_k,K_{1,\D+1}) \le n\D(\D-1)^{k-2}/2$ is achieved 
by disjoint unions of any $\D$-regular graph $G$ of girth at least $k$ (such graphs $G$ exist for all $\D \ge 3$ and $k\ge 2$ \cite{Sachs63}), so $\ex(n,P_k,K_{1,\D+1})=(1+o_n(1))\N(P_k,\floor[\big]{\frac{n}{\abs{V(G)}}}G)$. If $H$ is a complete bipartite graph $K_{a,b}$ for $2 \le a \le b \le \D$, then 
disjoint unions of balanced complete bipartite graphs are extremal: $\ex(n,K_{a,b},K_{1,\D+1}) = (1+o_n(1))\N(K_{a,b},\floor[\big]{\frac{n}{2\D}}K_{\D,\D})$ \cite[Theorem 1.6(i)]{GP22bipartite}.

In 2024 Nir and the present author \cite{KN24} proved that for every graph $H$ containing a dominating vertex, disjoint complete graphs $K_{\D+1}$ have the maximum number of subgraphs isomorphic to $H$ among $K_{1,\D+1}$-free graphs. In fact, we proved a bit more; since $H\ne K_2$ we may fix the number of edges or number of $u$-cliques for $u > 2$, instead of the number of vertices, to make sure the graph is globally not too large. To state these results, as well as those of the present paper, we introduce the following notation.

\begin{defn}\label{def:exu}
    For positive integers $u$ and $p$, a graph $H$ containing $K_u$ as a subgraph, and a graph $F$ or set of graphs $\mathcal{F}$,
    \begin{align*}
        \ex_u(p,H,F) &:= \max\set{\N(H,G) : \text{$G$ is an $F$-free graph and } k^u(G) = p }, \text{ and}\\
        \ex_u(p,H,\mathcal{F}) &:= \max\set{\N(H,G) : \text{$G$ is an $F$-free graph for every $F\in \mathcal{F}$ and } k^u(G) = p }.
    \end{align*}
\end{defn}
The function $\ex_1(p,H,F)$ is better known as $\ex(p,H,F)$, and $\ex_2(p,H,F)$ is better known as $\mex(p,H,F)$. We require that $H$ contain $K_u$ so that the condition $\N(K_u,G) = p$ ensures that a finite maximum value exists (otherwise, an infinite graph consisting of disjoint copies of $H$ satisfies all constraints). Although this notation is new, $\ex_u(n,H,F)$ has been studied in \cite{AS24,Frohmader08,H77,KR23,KN24}. The number $\mex(m,K_t,F)$ has been determined for $F$ a path \cite{CC24}, star \cite{CC21}, or clique \cite{Frohmader08}; the corresponding numbers $\ex(n,K_t,F)$ are also known for $F$ a path \cite{CC24}, star \cite{Chase20,GLS,CD22}, or clique \cite{turan}. Stability and Erd\H{o}s-Stone type results for $\mex$ were studied in \cite{RU18}. Recently an upper bound on the order of $\mex(m,K_t,F)$ in terms of the order of $\ex(n,K_t,F)$ was found \cite{WXZZ25}.

With this notation, we can state the recent result on graphs with bounded maximum degree.

\begin{thm}[Kirsch and Nir \cite{KN24}]\label{thm:Hstar}
    Let $u$ be a positive integer and $H$ be a graph containing at least $u$ dominating vertices. Then $\ex_u(p,H,K_{1,\D+1}) = (1+o_p(1))\N(H,\floor[\Big]{\frac{p}{\binom{\D+1}{u}}}K_{\D+1})$. In particular, if $H$ is a graph that has a dominating vertex, then $\ex(n,H,K_{1,\D+1}) = (1+o_n(1))\N(H,\floor[\big]{\frac{n}{\D+1}}K_{\D+1})$, and if $H$ has at least two dominating vertices, then $\mex(m,H,K_{1,\D+1})$ $=(1+o_m(1))\N(H,\floor[\Big]{\frac{m}{\binom{\D+1}{2}}}K_{\D+1})$.
\end{thm}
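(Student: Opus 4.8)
The plan is to prove the displayed formula for $\ex_u(p,H,K_{1,\D+1})$ directly and then read off the two ``in particular'' statements by specializing: taking $u=1$ gives $k^1(G)=v(G)$, so $\ex_1(n,\cdot)=\ex(n,\cdot)$ and $\binom{\D+1}{1}=\D+1$, while taking $u=2$ gives $k^2(G)=e(G)$, so $\ex_2(m,\cdot)=\mex(m,\cdot)$ with relevant constant $\binom{\D+1}{2}$; the hypotheses ``at least one'' and ``at least two'' dominating vertices are exactly $d\ge 1$ and $d\ge 2$, where I write $d$ for the number of dominating vertices of $H$ (so $d\ge u$). Since $H$ has a dominating vertex it is connected, and hence $\N(H,aK_{\D+1})=a\,\N(H,K_{\D+1})$ for every $a$. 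For the lower bound I would take $a=\floor{p/\binom{\D+1}{u}}$ disjoint copies of $K_{\D+1}$, which is $K_{1,\D+1}$-free and has $a\binom{\D+1}{u}\le p$ copies of $K_u$; adding $O(1)$ further disjoint copies of $K_u$ makes the $K_u$-count exactly $p$ while contributing only $O(1)$ copies of $H$, giving $\ex_u(p,H,K_{1,\D+1})\ge(1-o_p(1))\tfrac{p}{\binom{\D+1}{u}}\N(H,K_{\D+1})$.

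The substance is the matching upper bound, which I would deduce from the exact inequality
\[
\binom{d}{u}\,\N(H,G)\le \frac{\binom{d}{u}\,\N(H,K_{\D+1})}{\binom{\D+1}{u}}\cdot k^u(G)
\]
valid for every $K_{1,\D+1}$-free $G$. To prove it I would count \emph{flags}: pairs $(H_0,Q)$ in which $H_0\subseteq G$ is a (not necessarily induced) copy of $H$ and $Q$ is a $u$-subset of the dominating vertices of $H_0$. The $d$ dominating vertices of a fixed $H_0$ are pairwise adjacent, so $H_0$ contributes exactly $\binom{d}{u}$ flags and the total number of flags is $\binom{d}{u}\N(H,G)$. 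Counting instead by the second coordinate, fix a $u$-clique $Q$ of $G$ and let $g(Q)$ be the number of flags with that $Q$. Every vertex of $Q$ is adjacent in $H_0$, hence in $G$, to all other vertices of $H_0$, so the remaining vertices of any such $H_0$ lie in the common neighborhood $W_Q:=\bigl(\bigcap_{q\in Q}N_G(q)\bigr)\setminus Q$; since $\deg_G(q)\le\D$, at most $\D-(u-1)$ neighbors of $q$ lie outside $Q$, whence $\abs{Q\cup W_Q}\le\D+1$.

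Thus every copy counted by $g(Q)$ sits inside $R:=G[Q\cup W_Q]$, a graph on at most $\D+1$ vertices in which each vertex of $Q$ is dominating. Embedding $R$ into $K_{\D+1}$ and filling in all missing edges cannot remove any copy of $H$ in which $Q$ is dominating, because for not-necessarily-induced subgraphs these counts are monotone under adding vertices and edges; hence $g(Q)\le g^{*}$, where $g^{*}$ is the number of copies of $H$ in $K_{\D+1}$ having one fixed $u$-subset among their dominating vertices. Double-counting flags inside the vertex-transitive $K_{\D+1}$ gives $\binom{\D+1}{u}g^{*}=\binom{d}{u}\N(H,K_{\D+1})$, so $g^{*}=\binom{d}{u}\N(H,K_{\D+1})/\binom{\D+1}{u}$. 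Summing $g(Q)\le g^{*}$ over the $k^u(G)=p$ cliques $Q$ and recalling that the left side totals $\binom{d}{u}\N(H,G)$ yields exactly the displayed inequality, i.e.\ $\N(H,G)\le\tfrac{p}{\binom{\D+1}{u}}\N(H,K_{\D+1})$, matching the lower bound up to the $o_p(1)$ rounding.

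The step I expect to require the most care is not any computation but the bookkeeping in the flag argument: one must use that being a dominating vertex is an intrinsic property of the abstract subgraph $H_0$ (so that completing $R$ to a clique preserves every copy counted by $g(Q)$), and one must count subgraphs rather than induced subgraphs, since for induced copies the completion step fails and $K_{\D+1}$ would no longer be locally optimal. It is worth emphasizing that the upper bound is in fact an exact, non-asymptotic inequality; the only $o_p(1)$ error in the theorem comes from the floor and the bounded top-up gadget on the lower-bound side, which is what pins the optimal constant at $\N(H,K_{\D+1})/\binom{\D+1}{u}$.
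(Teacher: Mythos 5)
Your argument is correct, and it is essentially the standard double-counting proof: the paper itself does not prove \cref{thm:Hstar} but cites it from \cite{KN24}, and your flag count over pairs $(H_0,Q)$ with $Q\in\binom{\Dom(H_0)}{u}$ is exactly the scheme this paper uses for its own \cref{thm:rhou}, with the trivial local bound (for a $u$-clique $c$ in a $K_{1,\D+1}$-free graph, $G[c\cup N(c)]$ has at most $\D+1$ vertices, so the copies of $H$ dominated by $c$ inject into those of $K_{\D+1}$ dominated by a fixed $u$-set) playing the role that $\N(H^{\downarrow u},G[N(c)])\le \N(H^{\downarrow u},\T_{\w-u}(\D))$ plays there. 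You correctly identify the two points that need care, namely that copies are counted as (not necessarily induced) subgraphs so the counts are monotone under adding edges, and that $\Dom(H_0)$ is intrinsic to the copy $H_0$ so the completion step preserves every flag; the symmetry computation $g^*=\binom{\dom(H)}{u}\N(H,K_{\D+1})/\binom{\D+1}{u}$ and the $u=1,2$ specializations are also right.
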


Motivated by this recent progress, the aim of the present paper is to take first steps toward determining $\ex(n,H,\set{K_{1,\D+1},K_{\w+1}})$ for graphs $H\ne K_t$ and $\mex(m,H,\set{K_{1,1,\D+1},K_{\w+1}})$ for any graph $H$. 
We address both questions via $\ex_u(p,H,\set{K_u \vee I_{\D+1},K_{\w+1}})$ for all graphs $H$ containing at least $u$ dominating vertices. Here $K_u\vee I_{\D+1}$ is the complete split graph consisting of a clique of size $u$, an independent set of size $\D+1$, and all edges having one vertex in each. 

The first main theorem, \cref{thm:ex}, determines the asymptotics of $\ex(n,H,\set{K_{1,\D+1},K_{\w+1}})$ for all graphs $H$ containing a dominating vertex. The case $H=K_t$ was proved in \cite{KR20}. \cref{thm:ex} completes a modified form of \cref{table:graphs} where we aim to maximize the number of copies of a graph $H$ that is not necessarily $K_t$, when $K_{1,\D+1}$ is forbidden we require the graph $H$ to have a dominating vertex, and when $K_{\w+1}$ is forbidden we require $\w$ to be sufficiently large.
\begin{thm}\label{thm:ex}
    Let $H$ be a graph containing a dominating vertex. Let $H'$ be the result of deleting a dominating vertex from $H$. Let $n > \D \ge \w \ge w_0(H')+1$ and $L = \T_{\w}(\D+ \floor{\frac{\D}{\w-1}})$. Then
    \[
        \N(H,\floor[\Big]{\frac{n}{\abs{V(L)}}}L) \le \ex(n,H,\set{K_{1,\D+1}, K_{\w+1}}) \le \frac{\N(H',\T_{\w-1}(\D))}{\dom(H)}n,
    \]
and 
\begin{align*}
    \ex(n,H,\set{K_{1,\D+1}, K_{\w+1}}) = (1-o_n(1))c&n, \quad\text{where}\\
    c&=(1+o_{\D}(1))\frac{\N(H,L)}{\abs{V(L)}},
\end{align*}
and $\abs{V(L)}=\D+ \floor{\frac{\D}{\w-1}}$. If $\D$ is a multiple of $\w-1$, then $\abs{V(L)} = \frac{\D\w}{\w-1}$, and
\[
    \ex(n,H,\set{K_{1,\D+1}, K_{\w+1}}) 
    = (1-o_n(1))\frac{\N\parens{H,L}}{\abs{V(L)}}n.
\]
\end{thm}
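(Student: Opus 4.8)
The plan is to base everything on one exact counting identity. Since $H$ has a dominating vertex it is connected; fix a dominating vertex $w_0$ and write $H' = H - w_0$. Let $\hom^{\mathrm{inj}}(F,G)$ denote the number of injective homomorphisms $F \to G$. I claim that for every graph $G$,
\[
    \N(H,G) = \frac{1}{\dom(H)}\sum_{v \in V(G)} \N(H',G[N(v)]). \qquad (\star)
\]
To prove $(\star)$ I would observe that an injective homomorphism $H \to G$ is precisely a choice of the image $v$ of $w_0$ together with an injective homomorphism $H' \to G[N(v)]$: because $w_0$ is adjacent to every other vertex, the rest of the image is forced into $N(v)$, and this correspondence is a bijection, so $\hom^{\mathrm{inj}}(H,G) = \sum_v \hom^{\mathrm{inj}}(H',G[N(v)])$. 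Dividing through by automorphism counts gives $\N(H,G) = \tfrac{\abs{\mathrm{Aut}(H')}}{\abs{\mathrm{Aut}(H)}}\sum_v \N(H',G[N(v)])$, and the constant is $1/\dom(H)$ because the dominating vertices of $H$ form a single $\mathrm{Aut}(H)$-orbit (any two may be transposed while fixing all other vertices) whose stabilizer of $w_0$ restricts isomorphically onto $\mathrm{Aut}(H')$; orbit--stabilizer then yields $\abs{\mathrm{Aut}(H)} = \dom(H)\,\abs{\mathrm{Aut}(H')}$.

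The upper bound follows immediately. If $G$ is $K_{1,\D+1}$-free and $K_{\w+1}$-free on $n$ vertices, then each $G[N(v)]$ has at most $\D$ vertices and is $K_{\w}$-free, since a $K_{\w}$ in $N(v)$ together with $v$ would form $K_{\w+1}$. As $\w-1 \ge w_0(H')$, \cref{thm:turangood} applied to $H'$ gives $\N(H',G[N(v)]) \le \N(H',\T_{\w-1}(\abs{N(v)})) \le \N(H',\T_{\w-1}(\D))$, the last inequality by monotonicity of $\N(H',\cdot)$ along $\T_{\w-1}(m) \subseteq \T_{\w-1}(\D)$ for $m \le \D$. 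Substituting into $(\star)$ and summing over the $n$ vertices gives exactly $\N(H,G) \le \frac{\N(H',\T_{\w-1}(\D))}{\dom(H)}\,n$.

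For the lower bound I would exhibit $\floor{n/\abs{V(L)}}L$. Writing $\D = q(\w-1)+r$ with $0 \le r < \w-1$, one checks $\abs{V(L)} = q\w + r$, that $L$ has $r$ parts of size $q+1$ and $\w-r$ parts of size $q$, and hence that its maximum degree is $\abs{V(L)} - q = \D$; thus $L$, and any disjoint union of copies of it, is admissible. Since $H$ is connected, $\N(H,\floor{n/\abs{V(L)}}L) = \floor{n/\abs{V(L)}}\,\N(H,L)$, which is the claimed lower bound.

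Finally, for the asymptotics I would note that disjoint unions make $\ex(n,H,\set{K_{1,\D+1},K_{\w+1}})$ superadditive in $n$, so by Fekete's lemma $c := \lim_n \ex(n,H,\set{K_{1,\D+1},K_{\w+1}})/n$ exists and equals $\sup_n \ex(n,\cdot)/n$; the two bounds above put $c$ in the interval with endpoints $\frac{\N(H,L)}{\abs{V(L)}}$ and $\frac{\N(H',\T_{\w-1}(\D))}{\dom(H)}$ and give $\ex = (1-o_n(1))cn$. The crux is to show these endpoints agree up to a factor $1+o_\D(1)$, which I would establish by applying $(\star)$ to $G = L$ itself: the neighborhood of a vertex in part $P_i$ induces the balanced graph $\T_{\w-1}(\abs{V(L)}-\abs{P_i})$, and the arithmetic above gives $\abs{V(L)} - \abs{P_i} \in \set{\D-1,\D}$. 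Hence $\dom(H)\N(H,L) = \sum_i \abs{P_i}\,\N(H',\T_{\w-1}(\abs{V(L)}-\abs{P_i}))$ lies between $\abs{V(L)}\N(H',\T_{\w-1}(\D-1))$ and $\abs{V(L)}\N(H',\T_{\w-1}(\D))$; since both Tur\'an counts are $\Theta(\D^{v(H')})$ with the same leading coefficient, their ratio tends to $1$, giving $c = (1+o_\D(1))\frac{\N(H,L)}{\abs{V(L)}}$. When $\w-1 \mid \D$ the parts of $L$ are all equal and every neighborhood induces exactly $\T_{\w-1}(\D)$, so $(\star)$ gives $\dom(H)\N(H,L) = \abs{V(L)}\N(H',\T_{\w-1}(\D))$; the two endpoints coincide and $c = \frac{\N(H,L)}{\abs{V(L)}}$ exactly, which is the sharper final statement. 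I expect the main obstacle to be the clean derivation of $(\star)$, especially the orbit--stabilizer bookkeeping, together with the arithmetic identifying the induced neighborhoods of $L$ with (near-)balanced Tur\'an graphs on about $\D$ vertices.
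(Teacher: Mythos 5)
Your proof is correct and follows essentially the same route as the paper's (which obtains \cref{thm:ex} as the $u=1$ case of \cref{thm:summary}): your identity $(\star)$ is the paper's double count of pairs $(v,J)$ with $v$ a dominating vertex of $J$ (\cref{thm:rhou}), the upper bound applies \cref{thm:turangood} to each $G[N(v)]$, the lower bound uses disjoint copies of $L$ together with Fekete's lemma (\cref{lem:sup}, \cref{prop:cliqueLB}), and the asymptotic comparison evaluates $(\star)$ on $L$ itself to get $\rho_u(H,L) \ge \N(H',\T_{\w-1}(\D-1))/\dom(H)$, exactly as in \cref{thm:rhouasymp}. The only divergence is that you justify $\N(H',\T_{\w-1}(\D-1))/\N(H',\T_{\w-1}(\D))\to 1$ by appealing to the leading-order asymptotics of subgraph counts in Tur\'an graphs, whereas the paper proves this self-containedly (\cref{lem:ulimit}) by double-counting vertex--copy incidences; both are valid, and your orbit--stabilizer derivation of the constant $1/\dom(H)$ is an equivalent repackaging of the paper's bijection $\N_v(H,G)=\N(H',G[N(v)])$.
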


The second main theorem determines the asymptotics of $\mex(m,H,\set{K_{1,1,\D+1},K_{\w+1}})$ for all graphs $H$ containing at least two dominating vertices. Previously $\mex(m,H,\set{K_{1,1,\D+1},K_{\w+1}})$ had not been considered for any graph $H$, so in particular the result on the asymptotics of $\mex(m,K_t,\set{K_{1,1,\D+1},K_{\w+1}})$ is new.

\begin{thm}\label{thm:mex}Let $H$ be a graph containing at least $2$ dominating vertices. Let $H''$ be the result of deleting $2$ dominating vertices from $H$. Let $m>\D \ge \w \ge w_0(H'')+ 2$ and $L = \T_{\w}(\D+ 2\floor{\frac{\D}{\w-2}})$.
 Then
    \[
    \N(H,\floor[\Big]{\frac{m}{\abs{E(L)}}}L) \le \mex(m,H,\set{K_{1,1,\D+1}, K_{\w+1}}) \le \frac{\N(H'',\T_{\w-2}(\D))}{\binom{\dom(H)}{2}}m,
    \]  
and 
\begin{align*}
    \mex(m,H,\set{K_{1,1,\D+1}, K_{\w+1}}) = (1-o_m(1))c&m,\quad\text{where}\\
    c&= (1+o_{\D}(1))\frac{\N(H,L)}{\abs{E(L)}}.
\end{align*}
If $\D$ is a multiple of $\w-2$, then $L=\T_{\w}(\frac{\D\w}{\w-2})$, and
\[
    \mex(m,H,\set{K_{1,1,\D+1}, K_{\w+1}})
    = (1-o_m(1))\frac{\N(H,L)}{\abs{E(L)}}m.
\]
\end{thm}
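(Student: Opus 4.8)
The plan is to prove \cref{thm:mex} by a double-counting identity that roots each copy of $H$ at the edge joining two of its dominating vertices, thereby reducing the count of $H$-copies to a count of $H''$-copies inside the common neighborhoods of edges, where \cref{thm:turangood} can be applied. Write $d = \dom(H)$ and let $R$ be the set of non-dominating vertices of $H$, so that $H = K_d \vee H[R]$ and $H'' = K_{d-2}\vee H[R]$. Any dominating vertex of $H[R]$ would also dominate $H$, so $H[R]$ has no dominating vertex; consequently every automorphism of $H$ (resp.\ $H''$) preserves the split into dominating and non-dominating vertices, giving $\mathrm{Aut}(H) \cong S_d \times \mathrm{Aut}(H[R])$ and $\mathrm{Aut}(H'')\cong S_{d-2}\times \mathrm{Aut}(H[R])$, and hence $\abs{\mathrm{Aut}(H)} = d(d-1)\abs{\mathrm{Aut}(H'')}$.

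The central step is the identity, valid for every graph $G$,
\[
    \N(H,G) = \frac{1}{\binom{d}{2}}\sum_{xy \in E(G)} \N\parens{H'', G[N(x)\cap N(y)]},
\]
where $N(x)$ denotes the neighborhood of $x$. To prove it I would fix two dominating vertices $a,b$ of $H$ and count injective homomorphisms: every embedding $\phi$ of $H$ into $G$ sends $ab$ to an ordered edge $(\phi(a),\phi(b))$ and restricts to an embedding of $H - \set{a,b}\cong H''$ into $G[N(\phi(a))\cap N(\phi(b))]$, since $a,b$ dominate $H$ and so all remaining vertices land in the common neighborhood; conversely each edge together with such an embedding of $H''$ reconstitutes $\phi$ uniquely. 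This bijection shows that the number of embeddings of $H$ equals $2\sum_{xy}(\text{embeddings of }H'')$, and dividing by $\abs{\mathrm{Aut}(H)}$ and using $\abs{\mathrm{Aut}(H)} = d(d-1)\abs{\mathrm{Aut}(H'')}$ converts this into the displayed identity.

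For the upper bound I would apply this identity to an arbitrary $G$ that is $K_{1,1,\D+1}$-free and $K_{\w+1}$-free with $m$ edges. For each edge $xy$, the $K_{1,1,\D+1}$-freeness forces $\abs{N(x)\cap N(y)}\le \D$, and the $K_{\w+1}$-freeness forces $G[N(x)\cap N(y)]$ to be $K_{\w-1}$-free (a $K_{\w-1}$ there would extend by $x,y$ to a $K_{\w+1}$). Hence, by monotonicity, each summand is at most $\ex(\D,H'',K_{\w-1}) = \N(H'',\T_{\w-2}(\D))$, the last equality being \cref{thm:turangood} applied to $H''$ with $\w - 2 \ge w_0(H'')$; summing over the $m$ edges yields $\N(H,G)\le \frac{\N(H'',\T_{\w-2}(\D))}{\binom{d}{2}}m$, which is the stated upper bound.

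For the lower bound I would take $\floor{m/\abs{E(L)}}$ disjoint copies of $L = \T_{\w}(\D+2\floor{\D/(\w-2)})$, which is $K_{\w+1}$-free and, by a direct codegree computation, $K_{1,1,\D+1}$-free, and which contains every copy of the connected graph $H$ within a single component; this gives $\N(H, \floor{m/\abs{E(L)}}L) = (1-o_m(1))\frac{\N(H,L)}{\abs{E(L)}}m$. To see that this matches the upper bound, I would apply the same identity to $L$ itself: when $\D$ is a multiple of $\w-2$ the graph $L = \T_\w(\frac{\D\w}{\w-2})$ is balanced and the common neighborhood of every edge is exactly $\T_{\w-2}(\D)$, so the identity gives $\frac{\N(H,L)}{\abs{E(L)}} = \frac{\N(H'',\T_{\w-2}(\D))}{\binom{d}{2}}$ exactly and the two bounds coincide. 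The main obstacle is the remaining non-divisible case: here $L$ has parts of two sizes, the edge common neighborhoods are various graphs $\T_{\w-2}(k)$ with $k \le \D$, and I would need to show that all but a vanishing fraction of them are large enough that $\frac{\N(H,L)}{\abs{E(L)}} = (1-o_\D(1))\frac{\N(H'',\T_{\w-2}(\D))}{\binom{d}{2}}$, squeezing $c$ between the two bounds and producing the stated $(1+o_\D(1))$ factor.
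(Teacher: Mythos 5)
Your strategy coincides with the paper's own (the paper proves the general-$u$ statement, \cref{thm:summary}, and specializes to $u=2$): your identity $\binom{\dom(H)}{2}\N(H,G)=\sum_{xy\in E(G)}\N(H'',G[N(x)\cap N(y)])$ is exactly the double count in the proof of \cref{thm:rhou}, your upper bound and your treatment of the case $\w-2\mid\D$ match the paper, and the lower bound construction is \cref{prop:cliqueLB}. The step you defer as ``the main obstacle'' is, however, a genuine gap, and the route you sketch for it is not quite right. Write $\D=a(\w-2)+b$ with $0<b<\w-2$. In $L=\T_{\w}(\D+2\floor{\D/(\w-2)})$ the edges joining two parts of size $a+1$ have common neighborhood $\T_{\w-2}(\D-2)$, and such edges need \emph{not} be a vanishing fraction of $E(L)$ (take $b$ close to $\w/2$, say), so you cannot discard them as exceptional. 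What is true --- and what the paper uses --- is that \emph{every} edge's common neighborhood contains $\T_{\w-2}(\D-2)$, whence $\rho_2(H,L)=\N(H,L)/\abs{E(L)}\ge \N(H'',\T_{\w-2}(\D-2))/\binom{\dom(H)}{2}$; the asymptotic equivalence of the two displayed bounds then rests entirely on the limit $\N(H'',\T_{\w-2}(\D-2))/\N(H'',\T_{\w-2}(\D))\to 1$ as $\D\to\infty$. That limit is not automatic for an arbitrary $H''$ and you supply no argument for it. The paper proves it (\cref{lem:ulimit}) by double counting pairs $(v,J)$ with $J$ a copy of $H''$ in $\T_{\w-2}(n)$ and $v\in V(J)$, invoking \cref{thm:turangood} once more to show that a vertex in a largest part lies on the fewest copies (\cref{cor:turancount}), which gives $\N(H'',\T_{r}(n-1))/\N(H'',\T_{r}(n))\ge 1-\abs{V(H'')}/n$.

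A second, smaller omission: the statement asserts $\mex(m,H,\set{K_{1,1,\D+1},K_{\w+1}})=(1-o_m(1))cm$ for a single constant $c$, i.e.\ that $\lim_{m\to\infty}\mex(m,\cdot)/m$ exists for each fixed $\D$ and $\w$. In the non-divisible case your squeeze only pins the liminf and limsup to within a $(1+o_{\D}(1))$ factor of each other; the existence of the limit itself comes from superadditivity of $m\mapsto\mex(m,\cdot)$ under disjoint unions together with Fekete's lemma (\cref{lem:sup}), which you should add. Everything else --- the embedding/automorphism bookkeeping with $\abs{\mathrm{Aut}(H)}=d(d-1)\abs{\mathrm{Aut}(H'')}$, the codegree and clique checks on $L$, and the exact coincidence of the bounds when $\w-2$ divides $\D$ --- is correct.
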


Theorems \ref{thm:ex} and \ref{thm:mex} are proved as special cases, $u=1$ and $u=2$ respectively, of the following more general theorem, \cref{thm:summary}. 
The hypotheses that $\w$ is sufficiently large and that $H$ has at least $u$ dominating vertices carry through from Theorems \ref{thm:turangood} and \ref{thm:Hstar}.

\begin{thm}\label{thm:summary}Let $u$ be a positive integer and $H$ be a graph containing at least $u$ dominating vertices. Let $H^{\downarrow u}$ be the result of deleting $u$ dominating vertices from $H$. Let $p>\D \ge \w \ge w_0(H^{\downarrow u})+ u$ and $L = \T_{\w}(\D+ u\floor{\frac{\D}{\w-u}})$.
 Then
    \[
    \N(H,\floor[\Big]{\frac{p}{k^u(L)}}L) \le \ex_u(p,H,\set{K_u\vee I_{\D+1}, K_{\w+1}}) \le \frac{\N(H^{\downarrow u},\T_{\w-u}(\D))}{\binom{\dom(H)}{u}}p,
    \]  
and 
\begin{align*}
    \ex_u(p,H,\set{K_u\vee I_{\D+1}, K_{\w+1}}) = (1-o_p(1))c&p,\quad\text{where}\\
    c&= (1+o_{\D}(1))\frac{\N(H,L)}{k^u(L)}.
\end{align*}
If $\D$ is a multiple of $\w-u$, then $L=\T_{\w}(\frac{\D\w}{\w-u})$, and
\[
    \ex_u(p,H,\set{K_u\vee I_{\D+1}, K_{\w+1}})
    = (1-o_p(1))\frac{\N(H,L)}{k^u(L)}p.
\]
\end{thm}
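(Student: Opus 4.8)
The plan is to derive all three displayed claims from a single structural identity together with \cref{thm:turangood} used as a black box. The first point is to read the two forbidden subgraphs correctly. Because both $\N(\cdot,\cdot)$ and $F$-freeness are defined via not-necessarily-induced subgraphs, a graph $G$ contains $K_u\vee I_{\D+1}$ exactly when some $u$-clique $Q$ has at least $\D+1$ common neighbors; hence $G$ is $\set{K_u\vee I_{\D+1},K_{\w+1}}$-free if and only if $\w(G)\le\w$ and every $u$-clique $Q$ satisfies $\abs{N(Q)}\le\D$, where $N(Q)=\bigcap_{v\in Q}N_G(v)$ is the common neighborhood. This converts the hypotheses into a purely local bound on each $u$-clique, which is what makes the count tractable.

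Next I would set up the key identity. Write $d=\dom(H)$. Since the $d$ dominating vertices of $H$ are pairwise adjacent and adjacent to everything else, $H=K_u\vee H^{\downarrow u}$, and deleting any $u$ dominating vertices yields the same graph $H^{\downarrow u}$. Rooting each copy of $H$ at a $u$-subset of its dominating vertices gives a bijection between pairs $(\tilde H,S)$, where $\tilde H\cong H$ is a subgraph of $G$ and $S$ is a $u$-set of dominating vertices of $\tilde H$, and pairs $(Q,\hat H)$, where $Q$ is a $u$-clique and $\hat H\cong H^{\downarrow u}$ lies in $G[N(Q)]$: the forward map sends $(\tilde H,S)\mapsto(S,\tilde H-S)$, and the inverse reattaches $Q$ to $\hat H$ by a complete join, which is realizable precisely because $\hat H\subseteq N(Q)$. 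Counting both sides yields the exact identity
\[
\N(H,G)\binom{d}{u}=\sum_{Q}\N\bigl(H^{\downarrow u},G[N(Q)]\bigr),
\]
the sum ranging over the $k^u(G)$ many $u$-cliques $Q$ of $G$.

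For the upper bound I would bound each summand uniformly. Fix a $u$-clique $Q$. As $Q$ is complete to $N(Q)$ and $G$ is $K_{\w+1}$-free, $G[N(Q)]$ is $K_{\w-u+1}$-free, and by the reading above it has at most $\D$ vertices. Since $\w-u\ge w_0(H^{\downarrow u})$, \cref{thm:turangood} applies to $H^{\downarrow u}$ and gives $\N(H^{\downarrow u},G[N(Q)])\le\N(H^{\downarrow u},\T_{\w-u}(\abs{N(Q)}))\le\N(H^{\downarrow u},\T_{\w-u}(\D))$, the last step by monotonicity of $\N(H^{\downarrow u},\T_{\w-u}(\cdot))$ in the number of vertices (using $\T_{\w-u}(m)\subseteq\T_{\w-u}(m+1)$). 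Summing over the $k^u(G)=p$ cliques and dividing by $\binom{d}{u}$ gives the claimed upper bound.

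The lower bound comes from the disjoint union of $\floor{p/k^u(L)}$ copies of $L$, padded to exactly $p$ many $u$-cliques. Here a floor computation, writing $\D=(\w-u)r+s$ with $0\le s<\w-u$ so that $\abs{V(L)}=\w r+s$ and the $\w$ parts have sizes $r$ or $r+1$, shows that the largest common neighborhood of a $u$-clique in $L$ is exactly $\abs{V(L)}-ur=\D$; thus $L$ is $\set{K_u\vee I_{\D+1},K_{\w+1}}$-free and saturates the local constraint. Finally, to match the constants I would apply the very same identity to $L$ itself: each $L[N_L(Q)]$ is balanced complete $(\w-u)$-partite, so equals $\T_{\w-u}(\abs{N_L(Q)})$ with $\abs{N_L(Q)}\in\set{\D-u,\dots,\D}$. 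In the divisible case all parts have size $\D/(\w-u)$, every $\abs{N_L(Q)}=\D$, and the identity collapses to the exact equality $\N(H,L)/k^u(L)=\N(H^{\downarrow u},\T_{\w-u}(\D))/\binom{d}{u}$, so the upper and lower bounds coincide and give the final display without any $o_{\D}$ term. In general, since $\N(H^{\downarrow u},\T_{\w-u}(m))$ is a polynomial of fixed degree in $m$ and $\abs{N_L(Q)}$ differs from $\D$ by at most $u$, the two constants agree up to a factor $1+o_{\D}(1)$, which is exactly the asserted form of $c$; combined with $\floor{p/k^u(L)}=(1-o_p(1))p/k^u(L)$ this yields the asymptotic statement. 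I expect the main obstacle to be this final matching step, namely verifying that the construction exactly saturates the local constraint and that the averaged link counts in $L$ reproduce $\N(H^{\downarrow u},\T_{\w-u}(\D))$ up to the stated error, rather than the (now routine) upper-bound argument.
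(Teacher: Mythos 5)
Your proposal is correct and follows essentially the same route as the paper: the same double-counting identity $\binom{\dom(H)}{u}\N(H,G)=\sum_{c}\N(H^{\downarrow u},G[N(c)])$ combined with \cref{thm:turangood} for the upper bound, the same lower-bound construction $qL\cup rK_u$ with the same verification that $L$ exactly saturates the local constraint, and the same comparison of $\T_{\w-u}(\D-u)$ with $\T_{\w-u}(\D)$ for the asymptotic matching. The one place your argument is looser is the final limit $\N(H^{\downarrow u},\T_{\w-u}(\D-u))/\N(H^{\downarrow u},\T_{\w-u}(\D))\to 1$, which you justify by calling the count a polynomial of fixed degree in the number of vertices (it is only a quasi-polynomial because of the balanced part sizes, and one must also note the count is positive and growing); the paper instead proves this limit cleanly in \cref{lem:ulimit} via a vertex-counting argument that again invokes \cref{thm:turangood}.
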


In \cref{sec:prelim} we set up definitions and notation and prove preliminary results. We prove a lower bound on $\lim_{p\to\infty}\ex_u(p,H,F)/p$ in \cref{sec:lowerbound} and an upper bound in \cref{sec:upperbound}. The bounds are equal and achieved by $\T_{\w}(\frac{\D\w}{\w-u})$ when $\D$ is a multiple of $\w-u$. In \cref{sec:equiv} we show that the bounds are asymptotically equivalent as $\D$ approaches infinity.

In \cref{sec:local} we further generalize the upper bound on $\ex_u(p,H,\set{K_u\vee I_{\D+1}, K_{\w+1}})$ using a new localized approach to generalized Tur\'{a}n problems, introduced in \cite{AS24, Bradac, KN24, MT23}. Instead of globally forbidding large cliques and complete split graphs, we weight each copy of $H$ according to sizes of the locally largest clique and complete split graph and prove a tight inequality. \cref{thm:local} is the first localized form of a generalized Tur\'{a}n theorem that uses multiple distinct weight functions.

We conclude with open problems in \cref{sec:open}.

\section{Preliminaries}\label{sec:prelim}

We write $\cH(G)$ for the set of subgraphs of $G$ that are isomorphic to $H$, so $\N(H,G) = \abs{\cH(G)}$. We also write $\K[s](G)$ for the set of $s$-cliques in $G$, so $k^s(G)=\abs{\K[s](G)} = \N(K_s,G)$. A \emph{dominating vertex} of a graph $H$ is a vertex of $H$ that is adjacent to all other vertices of $H$. We write $\Dom(H)$ for $\set{v \in V(H): v \text{ is a dominating vertex of } H}$ and $\dom(H) := \abs{\Dom(H)}$. Notice that every subset of $\Dom(H)$ is a clique. For a $u$-clique $c$ and a graph $H$ having at least $u$ dominating vertices, $\cH_c(G)$ is the set of subgraphs of $G$ that are isomorphic to $H$ and that have $c \subseteq \Dom(H)$, and $\N_c(H,G):=\abs{\cH_c(G)}$. For a clique $c$, $N(c)$ is the set of common neighbors of the vertices of $c$, i.e., $N(c) = \cap_{v \in c} N(v)$ where $N(v)$ is the set of vertices adjacent to $v$, and $G[N(c)]$ is the induced subgraph of $G$ on $N(c)$.

For a graph $H$ having $\dom(H) \ge u$, we write $H^{\downarrow u}$ for the graph resulting from deleting $u$ dominating vertices of $H$, which is independent of the choice of dominating vertices. If $H$ is an arbitrary graph having at least $u$ dominating vertices, then $H^{\downarrow u}$ is an arbitrary graph, and if $J$ is an arbitrary graph, then $H:=J \vee K_u$ (where $\vee$ is the graph join) has $H^{\downarrow u} = J$.

We are interested in studying $\lim_{p\to\infty}\ex_u(p,H,\mathcal{F})/p$ when it is a finite positive number $c$, i.e. when $\ex_u(p,H,\mathcal{F})=(1-o_p(1))cp$. We also define $\rho_u(H,G) := \N(H,G)/k^u(G)$. While the paper focuses on $\mathcal{F} = \set{K_u\vee I_{\D+1}, K_{\w+1}}$, in this section we prove preliminary results on an arbitrary set of connected graphs $\mathcal{F}$.

\begin{lem}\label{lem:sup}
    For all positive integers $u$, graphs $H$ containing $K_u$, and sets of graphs $\mathcal{F}$ such that every graph $F \in \mathcal{F}$ is connected, 
    \[
        \lim_{p\to\infty} \frac{\ex_u(p,H,\mathcal{F})}{p} = \sup\set[\bigg]{\frac{\ex_u(p,H,\mathcal{F})}{p} : p \in \mathbb{N}} = \sup\set[\Big]{\rho_u(H,G) : G \text{ is } \mathcal{F}\text{-free}}.
    \]
\end{lem}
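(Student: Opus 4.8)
The plan is to take the middle quantity as the hub and show that the other two equal it. The equality with $\sup\set{\rho_u(H,G) : G \text{ is } \mathcal{F}\text{-free}}$ is essentially definitional. For a fixed $p$, dividing by $p$ the maximum defining $\ex_u(p,H,\mathcal{F})$ and using that $\rho_u(H,G)=\N(H,G)/p$ whenever $k^u(G)=p$ gives
\[
    \frac{\ex_u(p,H,\mathcal{F})}{p} = \max\set{\rho_u(H,G) : G \text{ is } \mathcal{F}\text{-free and } k^u(G)=p}.
\]
Every $\mathcal{F}$-free graph $G$ with $k^u(G)\ge 1$ contributes to exactly one value of $p$, namely $p=k^u(G)$, so taking the supremum over all $p\in\mathbb{N}$ recovers the supremum of $\rho_u(H,G)$ over all $\mathcal{F}$-free $G$. (Each $p\ge 1$ is realized by $pK_u$, which is $\mathcal{F}$-free whenever $K_u$ is, as holds for the families considered here since each $F\in\mathcal{F}$ is connected; so every maximum above is over a nonempty family.)

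For the remaining equality I would invoke Fekete's lemma, so the key step is to show that $a_p := \ex_u(p,H,\mathcal{F})$ is superadditive, i.e.\ $a_{p+q}\ge a_p + a_q$. Let $G_1$ and $G_2$ be $\mathcal{F}$-free graphs achieving $a_p$ and $a_q$ with $k^u(G_1)=p$ and $k^u(G_2)=q$, and form the disjoint union $G:=G_1\sqcup G_2$. Then $G$ is an admissible competitor for $\ex_u(p+q,H,\mathcal{F})$ for three reasons: because $K_u$ is connected, every $u$-clique of $G$ lies in a single component and so $k^u(G)=p+q$; the copies of $H$ lying entirely in $G_1$ are distinct from those lying entirely in $G_2$, so $\N(H,G)\ge \N(H,G_1)+\N(H,G_2)=a_p+a_q$; and, \emph{crucially}, since every $F\in\mathcal{F}$ is connected, any copy of $F$ in $G$ is confined to a single component, so $G$ inherits $\mathcal{F}$-freeness from $G_1$ and $G_2$. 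Therefore $a_{p+q}\ge \N(H,G)\ge a_p + a_q$.

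Superadditivity then yields $\lim_{p\to\infty}a_p/p = \sup_{p}a_p/p$ by Fekete's lemma, which is the final equality. The step I expect to be the crux is the preservation of $\mathcal{F}$-freeness under disjoint union: this is precisely where the hypothesis that each $F\in\mathcal{F}$ is connected is \emph{indispensable}, whereas the additivity of $k^u$ and the lower bound on $\N(H,\cdot)$ hold for free. A minor technical point is that, for a general connected family $\mathcal{F}$, the value $a_p$ could in principle be infinite; since $a_p\ge 0$, one simply applies the version of Fekete's lemma valid for superadditive sequences in $[0,+\infty]$, for which $\lim_{p\to\infty} a_p/p=\sup_p a_p/p$ still holds, so finiteness of $a_p$ is never actually needed. (For the families $\mathcal{F}=\set{K_u\vee I_{\D+1},K_{\w+1}}$ studied in this paper the $a_p$ are in fact finite, as the upper bound of \cref{sec:upperbound} shows.)
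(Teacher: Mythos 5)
Your proof is correct and follows essentially the same route as the paper's: superadditivity of $p\mapsto\ex_u(p,H,\mathcal{F})$ via disjoint unions (where connectedness of each $F\in\mathcal{F}$ is exactly what preserves $\mathcal{F}$-freeness), Fekete's lemma for the first equality, and the definitional identification of the two suprema for the second. Your extra remarks---allowing $a_p=+\infty$ in Fekete's lemma and noting that $pK_u$ realizes every $p$---are slightly more careful than the paper's treatment but change nothing substantive.
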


\begin{proof}
    To prove the first equality, first, we show that $\set{\ex_u(p,H,\mathcal{F})}_{p\ge1}$ is a superadditive sequence. For any positive integers $p_1$ and $p_2$, let $G_1$ and $G_2$ be extremal graphs for $\ex_u(p_1,H,\mathcal{F})$ and $\ex_u(p_2,H,\mathcal{F})$, respectively. By definition $G_1$ and $G_2$ are $\mathcal{F}$-free. Since each graph $F$ in $\mathcal{F}$ is connected, if the disjoint union $G_1 \cup G_2$ contained $F$ then $G_1$ or $G_2$ would contain $F$. Hence $G_1 \cup G_2$ also is $\mathcal{F}$-free, and $k^u(G_1\cup G_2) = p_1 + p_2$. Then 
\[\ex_u(p_1+p_2,H,\mathcal{F}) \ge \N(H,G_1 \cup G_2) \ge \N(H,G_1) + \N(H,G_2) = \ex_u(p_1,H,\mathcal{F}) + \ex_u(p_2,H,\mathcal{F}).\]
    By superadditivity and Fekete's Lemma \cite{Fekete}, the limit exists and equals the supremum.

    The equation $\sup\set{\ex_u(p,H,\mathcal{F})/p : p \in \mathbb{N}}=
    \sup\set{\N(H,G)/p : G \text{ is } \mathcal{F}\text{-free and } k^u(G) = p \text{ for some } p \in \mathbb{N}}$ 
    holds because the former set is a subset of the latter, and each of the values that is in the latter set but not the former is bounded above by some value in the former set. Then $\sup\set{\N(H,G)/p : G \text{ is } \mathcal{F}\text{-free and } k^u(G) = p \text{ for some } p \in \mathbb{N}} 
    = \sup\set{\N(H,G)/k^u(G) : G \text{ is } \mathcal{F}\text{-free}}$.
\end{proof}

In Sections \ref{sec:lowerbound} and \ref{sec:upperbound} we show that $\lim_{p\to\infty}\ex_u(p,H,\set{K_u \vee I_{\D+1},K_{\w+1}})/p = c$ for some $0 < c < \infty$, so $\ex_u(p,H,\set{K_u \vee I_{\D+1},K_{\w+1}})=(1-o_p(1))cp$. The central object of study in this paper is this coefficient $c$, and we call it $P_u(H,\D,\w)$. In this section, we prove preliminary results on its more general form $P_u(H,\mathcal{F})$.

\begin{defn}\label{def:P}
    For all positive integers $u$, graphs $H$ containing $K_u$, and sets of graphs $\mathcal{F}$ such that every graph $F \in \mathcal{F}$ is connected, \[P_u(H,\mathcal{F}):=\lim_{p\to\infty} \frac{\ex_u(p,H,\mathcal{F})}{p}, \text{ and }
    P_u(H,\D,\w):=\lim_{p\to\infty} \frac{\ex_u(p,H,\set{K_u\vee I_{\D+1}, K_{\w+1}})}{p}.\]
\end{defn}

The following proposition is helpful in finding upper and lower bounds on $P_u(H,\mathcal{F})$.

\begin{prop}\label{prop:upperlowerbound}Let $u$ be a positive integer, $H$ be a graph containing $K_u$, and $\mathcal{F}$ be a set of connected graphs. 
For every $\mathcal{F}$-free graph $G$, and every graph $H$,
    $\rho_u(H,G) \le P_u(H,\mathcal{F})$.
    If for some real number $X$ and every $\mathcal{F}$-free graph $G$ we have $\rho_u(H,G) \le X$, then $P_u(H,\mathcal{F})\le X$.
\end{prop}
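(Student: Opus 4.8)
The plan is to prove both statements directly from \cref{lem:sup}, which identifies $P_u(H,\mathcal{F})$ with $\sup\set{\rho_u(H,G) : G \text{ is } \mathcal{F}\text{-free}}$. The entire proposition is essentially a restatement of the defining properties of a supremum, once we have that characterization in hand.

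For the first claim, I would argue as follows. Fix an $\mathcal{F}$-free graph $G$. By the final equality in \cref{lem:sup}, $P_u(H,\mathcal{F}) = \sup\set{\rho_u(H,G') : G' \text{ is } \mathcal{F}\text{-free}}$, where the supremum ranges over all $\mathcal{F}$-free graphs. Since $G$ is one such graph, the quantity $\rho_u(H,G)$ is one of the terms over which the supremum is taken, and hence $\rho_u(H,G) \le P_u(H,\mathcal{F})$ by the definition of supremum as an upper bound. This is immediate and requires no computation; the only thing to double-check is that $G$ is a legitimate competitor, i.e.\ that $\mathcal{F}$ consists of connected graphs so that \cref{lem:sup} applies, which is exactly the standing hypothesis.

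For the second claim, suppose $X \in \mathbb{R}$ satisfies $\rho_u(H,G) \le X$ for every $\mathcal{F}$-free graph $G$. Then $X$ is an upper bound for the set $\set{\rho_u(H,G) : G \text{ is } \mathcal{F}\text{-free}}$, and so its least upper bound does not exceed $X$; that is, $\sup\set{\rho_u(H,G) : G \text{ is } \mathcal{F}\text{-free}} \le X$. Invoking \cref{lem:sup} once more to rewrite the supremum as $P_u(H,\mathcal{F})$ gives $P_u(H,\mathcal{F}) \le X$, as desired.

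I do not anticipate a genuine obstacle here, since both parts are formal consequences of the supremum characterization in \cref{lem:sup}; the work was already done in proving that lemma (superadditivity plus Fekete). The only points worth stating carefully are that the standing hypotheses (namely that $H$ contains $K_u$ and that every $F \in \mathcal{F}$ is connected) are precisely what let us quote \cref{lem:sup}, and that both directions use the two complementary defining properties of the supremum—being an upper bound, and being the \emph{least} upper bound.
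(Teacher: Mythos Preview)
Your proposal is correct and follows exactly the same approach as the paper: both invoke \cref{lem:sup} to identify $P_u(H,\mathcal{F})$ with $\sup\set{\rho_u(H,G) : G \text{ is } \mathcal{F}\text{-free}}$ and then read off the two claims from the defining properties of a supremum. The paper's proof is simply the one-line version of what you wrote.
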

\begin{proof}By \cref{lem:sup}, $P_u(H,\mathcal{F})$ is the least upper bound of $\set[\big]{\rho_u(H,G) : G \text{ is } \mathcal{F}\text{-free}}$.
\end{proof}

\begin{prop}\label{prop:equal}
    Let $u$ be a positive integer, let $H$ be a graph containing $K_u$ as a subgraph, and let $\mathcal{F}$ be a set of connected graphs that are not contained in $K_u$. If there exists an $\mathcal{F}$-free graph $L$ such that $\rho_u(H,L) = P_u(H,\mathcal{F})$ (i.e., if $P_u(H,\mathcal{F})$ is a maximum and not just a supremum), then
    \[
        \ex_u(p,H,\mathcal{F}) = (1+o_p(1))\N(H,qL\cup rK_u) = (1-o_p(1))\frac{\N(H,L)}{k^u(L)}p,
    \]
    where $q$ and $r$ are defined by $p = q\cdot k^u(L)+r$ and $0 \le r < k^u(L)$. 
    When $k^u(L)$ divides $p$,
    \[\ex_u(p,H,\mathcal{F}) = \N(H,\frac{p}{k^u(L)}L).
    \]
\end{prop}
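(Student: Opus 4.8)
The plan is a two‑sided sandwich. For the lower bound I would use the explicit competitor $qL\cup rK_u$ named in the statement. First I would verify it is admissible for $\ex_u(p,H,\mathcal{F})$: it has $q\,k^u(L)+r = p$ cliques of size $u$, and it is $\mathcal{F}$-free because $L$ is $\mathcal{F}$-free by hypothesis, $K_u$ is $\mathcal{F}$-free since no member of $\mathcal{F}$ embeds in $K_u$, and a disjoint union of $\mathcal{F}$-free graphs is again $\mathcal{F}$-free as every $F\in\mathcal{F}$ is connected. Counting only those copies of $H$ that lie inside a single $L$-block already gives $\N(H,qL\cup rK_u)\ge q\,\N(H,L)$, hence $\ex_u(p,H,\mathcal{F})\ge q\,\N(H,L)$.

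For the upper bound I would invoke \cref{prop:upperlowerbound} (equivalently \cref{lem:sup}): since $\rho_u(H,L)=P_u(H,\mathcal{F})$ is the supremum of $\rho_u$ over all $\mathcal{F}$-free graphs, every $\mathcal{F}$-free $G$ satisfies $\rho_u(H,G)\le \N(H,L)/k^u(L)$, so $\ex_u(p,H,\mathcal{F})\le \tfrac{\N(H,L)}{k^u(L)}\,p$ for every $p$. Writing $q\,k^u(L)=p-r$ with $0\le r<k^u(L)$, the lower bound rearranges to $q\,\N(H,L)\ge \tfrac{\N(H,L)}{k^u(L)}p-\N(H,L)=(1-o_p(1))\tfrac{\N(H,L)}{k^u(L)}p$, the subtracted term being a constant independent of $p$. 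These estimates pin the chain
\[
q\,\N(H,L) \le \N(H,qL\cup rK_u) \le \ex_u(p,H,\mathcal{F}) \le \frac{\N(H,L)}{k^u(L)}\,p
\]
between two quantities whose ratio tends to $1$, which yields both asserted asymptotic equalities at once.

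In the divisible case $k^u(L)\mid p$ I obtain $r=0$ and $qL\cup rK_u=\tfrac{p}{k^u(L)}L$, and the chain collapses to exact equality: $\ex_u(p,H,\mathcal{F})\ge \N(H,qL)\ge q\,\N(H,L)=\tfrac{\N(H,L)}{k^u(L)}p\ge \ex_u(p,H,\mathcal{F})$ forces all terms equal, so $\ex_u(p,H,\mathcal{F})=\N(H,\tfrac{p}{k^u(L)}L)$; note that this incidentally forces $\N(H,qL)=q\,\N(H,L)$ without any need to assume $H$ connected. The argument is short once \cref{lem:sup} and \cref{prop:upperlowerbound} are in hand, so I do not expect a genuine obstacle; the only points requiring care are the admissibility check for $qL\cup rK_u$ (where both hypotheses on $\mathcal{F}$, connectedness and non-containment in $K_u$, are used) and confirming that the additive constant $\N(H,L)$ is absorbed into the $o_p(1)$.
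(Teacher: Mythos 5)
Your proof is correct and rests on the same two pillars as the paper's: the admissibility of $qL\cup rK_u$ together with the count $\N(H,qL\cup rK_u)\ge q\,\N(H,L)$, and the supremum characterization from \cref{lem:sup} giving $\ex_u(p,H,\mathcal{F})/p\le\rho_u(H,L)$ for every $p$. Packaging everything as a single sandwich whose endpoints have ratio $1-r/p\to 1$ is a slightly tidier organization than the paper's two separate limit computations, but it is essentially the same argument; your side remark that the divisible case forces $\N(H,qL)=q\,\N(H,L)$ even for disconnected $H$ is a correct consequence of the maximality hypothesis on $L$.
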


\begin{proof}
    Suppose the hypotheses of the proposition hold with $\rho_u(H,L) = P_u(H,\mathcal{F})$. Then
    \[
    \lim_{p\to\infty} \ex_u(p,H,\mathcal{F}) / \parens[\bigg]{\frac{\N(H,L)}{k^u(L)}p} = \frac{P_u(H,\mathcal{F})}{\rho_u(H,L)} = 1
    \]
    by definition, and $\ex_u(p,H,\mathcal{F}) = (1-o_p(1))\N(H,L)p/k^u(L)$. 
    
    The graph $qL \cup rK_u$ contains exactly $p$ cliques of size $u$ and is $\mathcal{F}$-free because each graph in $\mathcal{F}$ is connected, and the components of $qL\cup rK_u$ are $\mathcal{F}$-free. Then
    \begin{align*}
        \rho_u(H,L) &= P_u(H,\mathcal{F}) \ge \rho_u(H,qL\cup rK_u) = \N(H,qL\cup rK_u)/p\ge q\N(H,L)/(qk^u(L)+r)\\
        &= \rho_u(H,L)\cdot qk^u(L)/(qk^u(L)+r)
        = \rho_u(H,L)\cdot (1-r/p).
    \end{align*}
    In particular,
    \[
    \rho_u(H,qL\cup rK_u)(1-r/p)\le \rho_u(H,L)(1-r/p) \le \rho_u(H,qL\cup rK_u),
    \]
    and using the fact that $r$ is bounded between constants $0$ and $k^u(L)$ not depending on $p$, 
    \[
        \lim_{p\to\infty}\frac{\rho_u(H,L)}{\rho_u(H,qL\cup rK_u)} = 1.
    \]
    Therefore
    \begin{align*}
        \lim_{p\to\infty}\frac{\ex_u(p,H,\mathcal{F})}{\N(H,qL\cup rK_u)} &= \lim_{p\to\infty}\frac{\ex_u(p,H,\mathcal{F})}{\N(H,qL\cup rK_u)}\cdot \frac{\rho_u(H,qL\cup rK_u)}{\rho_u(H,L)}\\
        &= \lim_{p\to\infty}\frac{\ex_u(p,H,\mathcal{F})}{p\rho_u(H,L)}
        = \frac{P_u(H,\mathcal{F})}{\rho_u(H,L)} = 1,
    \end{align*}
    so $\ex_u(p,H,\mathcal{F}) = (1+o_p(1))\N(H,qL\cup rK_u)$.

    When $k^u(L)$ divides $p$, we have $qL\cup rK_u = (p/k^u(L))L$, and 
    \[
        P_u(H,\mathcal{F})=\rho_u(H,L)= \frac{\N(H,L)}{k^u(L)} = \frac{p}{k^u(L)}\N(H,L)/p = \N(H,\frac{p}{k^u(L)}L)/p = \rho_u\parens[\Big]{H,\frac{p}{k^u(L)}L},
    \]
    and for every $p$ we have $\ex_u(p,H,\mathcal{F})/p \le P_u(H,\mathcal{F})$ by \cref{lem:sup} and \cref{def:P}, so $\N(H,(p/k^u(L))L) \le \ex_u(p,H,\mathcal{F}) \le \rho_u(H,(p/k^u(L))L)p = \N(H,(p/k^u(L))L)$.
\end{proof}

\section{Lower bounds}\label{sec:lowerbound}

In this section we prove lower bounds on both $\ex_u(p,H,\set{K_u \vee I_{\D+1},K_{\w+1}})$ and $P_u(H,\D,\w) = \lim_{p\to\infty} \ex_u(p,H,\set{K_u \vee I_{\D+1},K_{\w+1}})/p$. To do so we define a lower bound graph.

Let $\D \ge \w \ge u+1 \ge 2$, and define $a$ and $b$ by $\D = a(\w-u)+b$ and $0 \le b \le \w-u-1$. Notice that 
\begin{equation}\label{eq:divisionedgesclique}
    \D+u\floor[\bigg]{\frac{\D}{\w-u}} = \D + ua = (a(\w-u)+b)+ua = a\w+b.
\end{equation}

\begin{defn}\label{def:LBgraphs}
    The \emph{lower bound graph} $L_u(\D,\w)$, called $L$ when $u$, $\D$, and $\w$ are clear, is $\T_{\w}(\D+ u\floor{\frac{\D}{\w-u}}) = \T_{\w}(a\w+b)$.
\end{defn}

\begin{lem}\label{lem:free}
    For every $\D \ge \w \ge u+1 \ge 2$ and every $p \ge 1$, define $q$ and $r$ by $p=q\cdot k^u(L)+r$ and $0 \le r < k^u(L)$. Then $L$ and $qL\cup rK_u$ are $\set{K_u \vee I_{\D+1},K_{\w+1}}$-free, and $k^u(qL\cup rK_u) = p$.
\end{lem}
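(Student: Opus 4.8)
The plan is to reduce everything to the structure of the Tur\'an graph $L = \T_{\w}(a\w+b)$ and then lift the two forbidden-subgraph properties from $L$ to the disjoint union $qL\cup rK_u$ using connectedness. First I would record the part sizes of $L$: since $0\le b\le \w-u-1<\w$, the Tur\'an graph $\T_{\w}(a\w+b)$ consists of $b$ parts of size $a+1$ and $\w-b$ parts of size $a$; moreover $a\ge 1$ (otherwise $\D=b\le \w-u-1<\w$, contradicting $\D\ge\w$), so every part is nonempty. In particular $L$ contains $K_{\w}\supseteq K_u$, so $k^u(L)\ge 1$, which is what makes the division $p=q\cdot k^u(L)+r$ meaningful.

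For $L$ itself, being $\w$-partite it is immediately $K_{\w+1}$-free. For the complete split graph I would use that a $u$-clique $c$ of $L$ meets exactly $u$ of the parts, so its common neighborhood $N(c)$ is the union of the remaining $\w-u$ parts and $L[N(c)]$ is complete $(\w-u)$-partite. A copy of $K_u\vee I_{\D+1}$ whose clique is $c$ would require an independent set of size $\D+1$ inside $N(c)$; but the maximum independent set of a complete multipartite graph is its largest part, so such a set has size at most the largest part of $L$, namely $a$ if $b=0$ and $a+1$ if $b\ge 1$. The remaining (and main) computation is to check this is at most $\D$: when $b=0$ one has $\D=a(\w-u)\ge a$ since $\w-u\ge 1$; when $b\ge 1$ the constraint $b\le \w-u-1$ forces $\w-u\ge 2$, whence $\D=a(\w-u)+b\ge 2a+1\ge a+1$. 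In either case $N(c)$ contains no $I_{\D+1}$, and since $c$ was an arbitrary $u$-clique, $L$ is $K_u\vee I_{\D+1}$-free.

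To pass to $qL\cup rK_u$ I would invoke that both $K_{\w+1}$ and $K_u\vee I_{\D+1}$ are connected (the latter because its $K_u$ dominates the independent set and $u\ge 1$), so any copy of a forbidden graph would lie inside a single component, exactly as in the superadditivity argument of \cref{lem:sup}. Each $L$-component is free by the previous paragraph, and each $K_u$-component is free because it has clique number $u<\w+1$ and only $u$ vertices, fewer than the $u+\D+1$ vertices of $K_u\vee I_{\D+1}$; hence $qL\cup rK_u$ is $\set{K_u\vee I_{\D+1},K_{\w+1}}$-free. Finally $k^u$ is additive over disjoint unions and $k^u(K_u)=1$, so $k^u(qL\cup rK_u)=q\,k^u(L)+r=p$ by the choice of $q,r$. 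The only genuinely nontrivial point is the largest-part-versus-$\D$ inequality, and the slight care it needs in the boundary case $\w-u=1$, which forces $b=0$.
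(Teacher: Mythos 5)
Your argument that $L$ is $K_u\vee I_{\D+1}$-free proves the wrong statement, and this is the heart of the lemma. In this paper ``$F$-free'' means $F$ does not occur as a \emph{subgraph} (see the first paragraph of the introduction), not as an induced subgraph. Containing $K_u\vee I_{\D+1}$ as a subgraph requires only a $u$-clique $c$ together with any $\D+1$ vertices in its common neighborhood $N(c)$: the non-edges of $I_{\D+1}$ impose no constraint, because a subgraph embedding need not preserve non-adjacency. (This is exactly why $K_{1,\D+1}$-freeness is equivalent to maximum degree at most $\D$.) So the statement that must be verified is $\abs{N(c)}\le\D$ for every $u$-clique $c$, whereas you verify only that $N(c)$ contains no \emph{independent} set of size $\D+1$, by bounding the independence number of the complete multipartite graph $L[N(c)]$ by the largest part size, $a+1\le\D$. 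That rules out induced copies only; it is a strictly weaker conclusion, and the inequality $a+1\le\D$ does not by itself bound $\abs{N(c)}$. It is also not the property the lemma is used for: in the proof of \cref{thm:rhou}, freeness is invoked precisely in the form ``$G[N(c)]$ is a graph on at most $\D$ vertices.''

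The repair is short and is what the paper does: by \cref{eq:divisionedgesclique}, $L$ has $\D+ua$ vertices, and the $u$ parts meeting $c$ contain at least $a$ vertices each, so $\abs{N(c)}\le(\D+ua)-ua=\D$; hence no $u$-clique has $\D+1$ common neighbors. The remainder of your proof is correct and matches the paper: $K_{\w+1}$-freeness from $\w$-partiteness, lifting freeness to $qL\cup rK_u$ via connectedness of both forbidden graphs, and additivity of $k^u$ over components with $k^u(K_u)=1$. Your observations that $a\ge1$ and $k^u(L)\ge1$ are correct but not needed for the lemma as stated.
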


\begin{proof}
    The graph $L=\T_{\w}(a\w+b)$ is $\w$-partite so $K_{w+1}$-free. If $c$ 
    is a $u$-clique in this graph, then $N(c)$ is the set of vertices in the $\w-u$ parts containing no vertices of $c$. By \cref{eq:divisionedgesclique}, $N(c)$ contains at most $(\D+ua) - ua = \D$ vertices. As $c$ was arbitrary, the graph $L$ is $K_u \vee I_{\D+1}$-free. The graph $qL\cup rK_u$ also is $\set{K_u \vee I_{\D+1},K_{\w+1}}$-free because its components are. Finally, $k^u(qL\cup rK_u) = q\cdot k^u(L)+r = p$.
\end{proof}

\cref{prop:lowerboundrhou} gives the lower bound on $P_u(H,\D,\w)$ used to determine $P_u(H,\D,\w)$ asymptotically in \cref{thm:rhouasymp}.
\begin{prop}\label{prop:lowerboundrhou}
    For every $\D \ge \w \ge u+1 \ge 2$ and every graph $H$ containing $K_u$ as a subgraph,  $\rho_u(H,L) \le P_u(H,\D,\w)$. That is, the lower bound graph gives a lower bound on $P_u(H,\D,\w)$.
\end{prop}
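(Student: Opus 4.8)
The plan is to recognize this proposition as an immediate application of the general lower-bound principle from \cref{prop:upperlowerbound}, instantiated at the specific family $\mathcal{F} = \set{K_u \vee I_{\D+1}, K_{\w+1}}$ and the specific graph $G = L$. Recall that \cref{prop:upperlowerbound} asserts that every $\mathcal{F}$-free graph $G$ satisfies $\rho_u(H,G) \le P_u(H,\mathcal{F})$, for any set $\mathcal{F}$ of connected graphs and any $H$ containing $K_u$. The strategy is therefore to check that the two hypotheses of that proposition hold in our setting and then read off the conclusion.

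First I would verify the connectivity hypothesis: both members of $\mathcal{F}$ are connected, since $K_{\w+1}$ is complete and hence connected, and the complete split graph $K_u \vee I_{\D+1}$ is connected because the assumption $u \ge 1$ forces every vertex of the independent set $I_{\D+1}$ to be joined to the nonempty clique $K_u$. Next, the $\mathcal{F}$-freeness of $L$ has already been established: \cref{lem:free} shows that $L = \T_{\w}(a\w+b)$ is $\set{K_u \vee I_{\D+1}, K_{\w+1}}$-free. With both hypotheses in hand, \cref{prop:upperlowerbound} gives $\rho_u(H,L) \le P_u(H,\set{K_u \vee I_{\D+1}, K_{\w+1}})$, and the latter quantity is precisely $P_u(H,\D,\w)$ by \cref{def:P}, which completes the argument.

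Since the deduction itself is a direct substitution into an already-proven inequality, I do not expect any real obstacle internal to this proposition. The substantive content lives in the earlier framework: the superadditivity argument and Fekete's Lemma underlying \cref{lem:sup} (which identifies $P_u(H,\mathcal{F})$ as a supremum of $u$-densities of $\mathcal{F}$-free graphs), together with the explicit verification in \cref{lem:free} that the Tur\'an graph $\T_{\w}(a\w+b)$ avoids both forbidden subgraphs. The role of this proposition is simply to record that $L$ is a concrete $\mathcal{F}$-free witness, so that its density $\rho_u(H,L)$ serves as an explicit lower bound for $P_u(H,\D,\w)$, to be evaluated asymptotically in the subsequent \cref{thm:rhouasymp}.
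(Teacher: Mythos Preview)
Your proposal is correct and follows essentially the same approach as the paper: invoke \cref{lem:free} to see that $L$ is $\set{K_u \vee I_{\D+1}, K_{\w+1}}$-free, then apply \cref{prop:upperlowerbound}. Your additional remarks verifying connectivity of the forbidden graphs and citing \cref{def:P} are fine but not strictly needed, as these are already built into the framework.
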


\begin{proof}
    By \cref{lem:free}, $L$ is $\set{K_u \vee I_{\D+1},K_{\w+1}}$-free. Every $\set{K_u \vee I_{\D+1},K_{\w+1}}$-free graph $G$ by \cref{prop:upperlowerbound} satisfies $\rho_u(H,G) \le P_u(H,\D,\w)$.
\end{proof}

\cref{prop:cliqueLB} gives the lower bound on $\ex_u(p,H,\set{K_u \vee I_{\D+1},K_{\w+1}})$ in \cref{thm:summary}.

\begin{prop}\label{prop:cliqueLB}
    For every $\D \ge \w \ge u+1 \ge 2$, every $p\ge 1$,  and every graph $H$ containing $K_u$ as a subgraph,  $\N(H,\floor[\big]{\frac{p}{k^u(L)}}L) \le \ex_u(p,H,\set{K_u \vee I_{\D+1},K_{\w+1}})$.
\end{prop}

\begin{proof}
    By \cref{lem:free} and \cref{def:exu}, and defining $q=\floor{p/k^u(L)}$ and $r$ as in \cref{lem:free},
    \[
    \N(H,\floor[\Big]{\frac{p}{k^u(L)}}L) \le \N(H,\floor[\Big]{\frac{p}{k^u(L)}}L\cup rK_u) \le \ex_u(p,H,\set{K_u \vee I_{\D+1},K_{\w+1}}).\qedhere
    \]
\end{proof}

\section{Upper bounds}\label{sec:upperbound}

In this section we prove the upper bound on the generalized Tur\'an number $\ex_u(p,H,\set{K_u\vee I_{\D+1}, K_{\w+1}})$ stated in \cref{thm:summary}, and the upper bound on $P_u(H,\D,\w)$ used in \cref{thm:rhouasymp}. We show that these upper bounds are achieved in the special case that $\w-u$ divides $\D$.

\begin{thm}\label{thm:rhou}
    For an integer $u \ge 1$, let $H$ be a graph having $\dom(H) \ge u$. For $\D \ge \w \ge \w_0(H^{\downarrow u})+u$, 
    \[
        P_u(H,\D,\w) \le \frac{\N(H^{\downarrow u},\T_{\w-u}(\D))}{\binom{\dom(H)}{u}} \quad\text{and} \quad \ex_u(p,H,\set{K_u\vee I_{\D+1}, K_{\w+1}}) \le \frac{\N(H^{\downarrow u},\T_{\w-u}(\D))}{\binom{\dom(H)}{u}} p.
    \]
    If $\D$ is a multiple of $\w-u$, then this upper bound is achieved by the lower bound graph $L=\T_{\w}(\frac{\D\w}{\w-u})$, i.e., 
    \[
        P_u(H,\D,\w) = \frac{\N(H^{\downarrow u},\T_{\w-u}(\D))}{\binom{\dom(H)}{u}} = \rho_u(H,\T_\w\parens[\Big]{\frac{\w\D}{\w-u}}).
    \]
\end{thm}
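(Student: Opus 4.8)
The plan is to pass from copies of $H$ to copies of $H^{\downarrow u}$ sitting in common neighborhoods of $u$-cliques, and then to bound each such neighborhood using \cref{thm:turangood}. First I would set up a double count. For every copy $H'$ of $H$ in $G$ we have $\dom(H')=\dom(H)$, and since every subset of $\Dom(H')$ is a clique, $H'$ belongs to $\cH_c(G)$ for exactly the $\binom{\dom(H)}{u}$ many $u$-cliques $c\subseteq \Dom(H')$. Summing over $c$ therefore gives
\[
    \binom{\dom(H)}{u}\N(H,G) = \sum_{c \in \K[u](G)} \N_c(H,G).
\]
Next I would identify $\N_c(H,G)$ with $\N(H^{\downarrow u}, G[N(c)])$. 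Using that $H \cong H^{\downarrow u}\vee K_u$, the map $H' \mapsto H'-c$ sends a copy of $H$ with $c\subseteq \Dom(H')$ to a copy of $H^{\downarrow u}$ in $G[N(c)]$ (every vertex of $H'-c$ is adjacent to all of $c$, hence lies in $N(c)$), and the inverse map joins $c$ back on; both are well defined because $c$ is a clique of $G$ and the remaining vertices lie in $N(c)$. This bijection yields $\N_c(H,G) = \N(H^{\downarrow u}, G[N(c)])$.

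Then I would exploit the forbidden subgraphs through the single clique $c$. As in the proof of \cref{lem:free}, $G$ being $K_u\vee I_{\D+1}$-free is equivalent to every $u$-clique $c$ satisfying $\abs{N(c)} \le \D$, while $K_{\w+1}$-freeness of $G$ forces $G[N(c)]$ to be $K_{\w-u+1}$-free (a $K_{\w-u+1}$ in $N(c)$ together with $c$ would be a $K_{\w+1}$). Thus $G[N(c)]$ is a $K_{(\w-u)+1}$-free graph on at most $\D$ vertices. Since $\w-u \ge \w_0(H^{\downarrow u})$, \cref{thm:turangood} applies to $H^{\downarrow u}$ with clique bound $\w-u$ and gives $\N(H^{\downarrow u}, G[N(c)]) \le \N(H^{\downarrow u}, \T_{\w-u}(\abs{N(c)}))$; monotonicity of $\N(H^{\downarrow u}, \T_{\w-u}(n))$ in $n$ (each $\T_{\w-u}(n)$ is a subgraph of $\T_{\w-u}(n+1)$) then bounds this by $\N(H^{\downarrow u}, \T_{\w-u}(\D))$. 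Feeding this per-clique bound into the double count gives $\N(H,G) \le \frac{\N(H^{\downarrow u},\T_{\w-u}(\D))}{\binom{\dom(H)}{u}}\,k^u(G)$ for every $\set{K_u\vee I_{\D+1},K_{\w+1}}$-free $G$, which is exactly the stated bound on $\ex_u$; the bound on $P_u(H,\D,\w)$ then follows from \cref{prop:upperlowerbound}.

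Finally, for the equality case $\w-u \mid \D$ I would run the same identity on $L = \T_\w(\frac{\D\w}{\w-u})$, which is balanced with parts of size $a=\frac{\D}{\w-u}$. Here every $u$-clique $c$ of $L$ meets $u$ distinct parts, so $N(c)$ is precisely the union of the remaining $\w-u$ parts and $L[N(c)]$ is the balanced complete $(\w-u)$-partite graph on $(\w-u)a = \D$ vertices, i.e.\ $\T_{\w-u}(\D)$. Hence the per-clique bound is an equality and $\binom{\dom(H)}{u}\N(H,L) = k^u(L)\,\N(H^{\downarrow u},\T_{\w-u}(\D))$, giving $\rho_u(H,L) = \frac{\N(H^{\downarrow u},\T_{\w-u}(\D))}{\binom{\dom(H)}{u}}$. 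Combining this with the lower bound $\rho_u(H,L) \le P_u(H,\D,\w)$ from \cref{prop:lowerboundrhou} and the upper bound just proved forces all three quantities to coincide.

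I expect the main obstacle to be the bookkeeping in the first two steps rather than any hard inequality: establishing the bijection between $\cH_c(G)$ and the copies of $H^{\downarrow u}$ in $G[N(c)]$ carefully enough that the count is exact (so that no loss is incurred before summing), and pinning down the equivalence between $K_u\vee I_{\D+1}$-freeness and the uniform common-neighborhood bound $\abs{N(c)}\le\D$. Once those are in place, the substantive content is supplied entirely by \cref{thm:turangood} together with the trivial monotonicity of Tur\'an-graph subgraph counts, and the equality case is a direct computation on $L$.
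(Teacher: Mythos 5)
Your proposal is correct and follows essentially the same route as the paper's proof: the same double count over pairs $(c,J)$ with $c\in\binom{\Dom(J)}{u}$, the identification $\N_c(H,G)=\N(H^{\downarrow u},G[N(c)])$, the observation that $G[N(c)]$ is $K_{\w-u+1}$-free on at most $\D$ vertices so that \cref{thm:turangood} applies, and the same equality computation on the balanced $L$. The only difference is that you spell out the bijection and the freeness equivalences in more detail than the paper does, which is fine.
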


\begin{proof}
    Let $G$ be a $\set{K_u \vee I_{\D+1},K_{\w+1}}$-free graph. Let $c$ be a $u$-clique of $G$. Then $G[N(c)]$ is a $K_{\w-u+1}$-free graph on at most $\D$ vertices, and $\w-u \ge \w_0(H^{\downarrow u})$, so by \cref{thm:turangood}, $\N(H^{\downarrow u},G[N(c)]) \le \N(H^{\downarrow u}, \T_{\w-u}(\D))$. By counting in two ways the number of ordered pairs $(c, J)$ where $J$ is a subgraph of $G$ isomorphic to $H$ that contains every vertex of $c$ as a dominating vertex (so $c \in \K[u](G)$), we have
	\begin{align*}
		\binom{\dom(H)}{u}\cdot \N(H,G) &= \sum_{c \in \K[u](G)}\N_{c}(H,G)= \sum_{c \in \K[u](G)} \N(H^{\downarrow u},G[N(c)])\\
		& \le \sum_{c \in \K[u](G)} \N(H^{\downarrow u}, \T_{\w-u}(\D)) = k^u(G) \cdot \N(H^{\downarrow u}, \T_{\w-u}(\D)),
	\end{align*}
	so
	\begin{equation}
	\rho_u(H,G) = \frac{\N(H,G)}{k^u(G)} \le \frac{\N(H^{\downarrow u},\T_{\w-u}(\D))}{\binom{\dom(H)}{u}}\label{eq:UB}
	\end{equation}
    for every $\set{K_u \vee I_{\D+1}, K_{\w+1}}$-free graph $G$, and by \cref{prop:upperlowerbound},
    \[
        P_u(H,\D,\w) \le \frac{\N(H^{\downarrow u},\T_{\w-u}(\D))}{\binom{\dom(H)}{u}}.
    \]
    From \cref{eq:UB} we also have $\N(H,G) \le \N(H^{\downarrow u},\T_{\w-u}(\D))k^u(G)/\binom{\dom(H)}{u}$ for every $\set{K_u \vee I_{\D+1}, K_{\w+1}}$-free graph $G$, so $\ex_u(p,H,\set{K_u\vee I_{\D+1}, K_{\w+1}}) \le \N(H^{\downarrow u},\T_{\w-u}(\D))p/\binom{\dom(H)}{u}$.
    
	When $w-u$ divides $\D$, the lower bound graph $L$ is $\T_{\w}(\D+u\floor{\frac{\D}{\w-u}}) = \T_{\w}(\frac{\w\D}{\w-u})$ (\cref{def:LBgraphs}). Each of the $\w$ parts has size exactly $\frac{\D}{w-u}$, so each $u$-clique $c$ has $G[N(c)] \cong \T_{\w-u}(\D)$. Therefore $L$ achieves equality in the above bound and is $\set{K_u \vee I_{\D+1},K_{\w+1}}$-free by \cref{lem:free}, so $P_u(H,\D,\w) = \rho_u(H,L) = \N(H^{\downarrow u},\T_{\w-u}(\D))/\binom{\dom(H)}{u}$.
    \end{proof}

We also draw the following conclusions about the generalized Tur\'an number when $w-u$ divides $\D$.

\begin{cor}
    For an integer $u\ge 1$, let $H$ be a graph having $\dom(H) \ge u$. Let $\D \ge \w \ge \w_0(H^{\downarrow u})+u$, where $\w-u$ divides $\D$. Then $L=\T_{\w}\parens{\frac{\w\D}{\w-u}}$,
    \begin{align*}
        \ex_u(p,H,\set{K_u\vee I_{\D+1},K_{\w+1}}) &= (1+o_p(1))\N(H,qL\cup rK_u)\\
        &= (1-o_p(1))\frac{\N(H,L)}{k^u(L)}p,
    \end{align*}
    where $q$ and $r$ are defined by $p = q\cdot k^u(L) + r$ and $0 \le r < k^u(L)$,
    and for $p$ a multiple of $k^u(L)$,
    \[
    \ex_u(p,H,\set{K_u\vee I_{\D+1},K_{\w+1}}) = \N(H,\frac{p}{k^u(L)}L).
    \]
\end{cor}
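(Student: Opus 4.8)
The final statement is a Corollary that combines Theorem~\ref{thm:rhou} (specifically its equality case when $\w-u \mid \D$) with the structural machinery of Proposition~\ref{prop:equal}. My plan is to verify the hypotheses of Proposition~\ref{prop:equal} for the lower bound graph $L = \T_{\w}(\frac{\w\D}{\w-u})$ and then simply invoke that proposition, since it delivers exactly the three displayed conclusions in the corollary.

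First I would record that under the divisibility assumption $\w-u \mid \D$, Theorem~\ref{thm:rhou} gives the \emph{equality} $P_u(H,\D,\w) = \rho_u(H,L)$, i.e.\ the supremum defining $P_u(H,\D,\w)$ is attained by the $\set{K_u\vee I_{\D+1},K_{\w+1}}$-free graph $L$. This is precisely the hypothesis ``there exists an $\mathcal{F}$-free graph $L$ with $\rho_u(H,L) = P_u(H,\mathcal{F})$'' required to apply Proposition~\ref{prop:equal}, with $\mathcal{F} = \set{K_u\vee I_{\D+1},K_{\w+1}}$. I would then note that $H$ contains $K_u$ as a subgraph (since $\dom(H)\ge u$ means any $u$ dominating vertices form a $K_u$), and that each graph in $\mathcal{F}$ is connected and not contained in $K_u$ (the star $K_u\vee I_{\D+1}$ has $\D+1\ge 1$ leaves beyond the clique, and $K_{\w+1}$ has $\w+1 > u$ vertices since $\w\ge u$), so the remaining hypotheses of Proposition~\ref{prop:equal} hold as well.

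With all hypotheses in place, applying Proposition~\ref{prop:equal} with this $L$ yields immediately that $\ex_u(p,H,\mathcal{F}) = (1+o_p(1))\N(H,qL\cup rK_u) = (1-o_p(1))\frac{\N(H,L)}{k^u(L)}p$ for $q,r$ defined by $p = q\cdot k^u(L)+r$, and that $\ex_u(p,H,\mathcal{F}) = \N(H,\frac{p}{k^u(L)}L)$ when $k^u(L)\mid p$. These are exactly the three assertions of the corollary, so the proof is essentially a one-line citation once the verification is complete.

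Since this is a direct application of two already-proved results, there is no genuine obstacle; the only care needed is the routine check that the side conditions of Proposition~\ref{prop:equal} are met (connectedness of the forbidden graphs, that they are not subgraphs of $K_u$, and that $K_u \subseteq H$). The substantive content—that $L$ achieves the extremal ratio—is inherited entirely from the divisibility case of Theorem~\ref{thm:rhou}, which in turn rests on Theorem~\ref{thm:turangood} (Morrison et al.) applied to each neighborhood $G[N(c)]$.
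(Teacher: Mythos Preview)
Your proposal is correct and follows exactly the paper's approach: the paper's proof is simply ``By \cref{thm:rhou}, $P_u(H,\D,\w) = \rho_u(H,\T_{\w}(\frac{\w\D}{\w-u}))$. Apply \cref{prop:equal}.'' Your additional verification of the side hypotheses of \cref{prop:equal} (connectedness, $K_u \subseteq H$, forbidden graphs not contained in $K_u$) is a welcome elaboration but not a departure.
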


\begin{proof}
    By \cref{thm:rhou}, $P_u(H,\D,\w) = \rho_u(H,\T_{\w}(\frac{\w\D}{w-u}))$. Apply \cref{prop:equal}.
\end{proof}

\section{Asymptotic equivalence of upper and lower bounds}\label{sec:equiv}
When $\w-u$ does not divide $\D$, the lower and upper bounds on $P_u(H,\D,\w)$ given by \cref{prop:lowerboundrhou} and \cref{thm:rhou} are not equal, but in \cref{thm:rhouasymp} we show that they are asymptotically equivalent. 
\cref{cor:turancount} and \cref{lem:ulimit} on subgraph counts in Tur\'an graphs play a key role in that proof. 
We state \cref{prop:turancount} for all values of $u$ although in fact we use only the case $u=1$ (\cref{cor:turancount}) in the proofs of \cref{lem:ulimit} and \cref{thm:rhouasymp}.

\begin{lem}\label{prop:turancount}
    Let $H$ be a graph, $r \ge \w_0(H)$, $1 \le u \le r$, and $n \ge 1$. Let $d$ be a $u$-clique in $\T_r(n)$ whose vertices lie in $u$ largest parts of $\T_r(n)$. Let $c$ be an arbitrary $u$-clique in $\T_r(n)$. Then the number of copies of $H$ in $\T_r(n)$ containing at least one vertex of $c$ is at least the number containing at least one vertex of $d$, which is $
    \N(H,\T_r(n)) - \N(H,\T_r(n-u))$.
\end{lem}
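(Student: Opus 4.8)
The plan is to argue by complementary counting, reducing the entire statement to a single application of \cref{thm:turangood} together with a short bookkeeping computation on part sizes. For a $u$-clique $e \in \set{c,d}$, every copy of $H$ in $\T_r(n)$ either meets $e$ or lies entirely in the vertex set outside $e$; the latter copies are exactly those of $H$ in the induced subgraph $\T_r(n)-e$. Hence the number of copies meeting $e$ equals $\N(H,\T_r(n)) - \N(H,\T_r(n)-e)$. So it suffices to establish two facts: (i) $\N(H,\T_r(n)-c) \le \N(H,\T_r(n-u))$ for an arbitrary $u$-clique $c$, and (ii) $\T_r(n)-d \cong \T_r(n-u)$ when $d$ lies in $u$ largest parts. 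Fact (ii) identifies the displayed quantity $\N(H,\T_r(n))-\N(H,\T_r(n-u))$ as the count for $d$, and (i) then shows the count for $c$ is at least this value.

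First I would prove (ii) by direct bookkeeping. Write $n = rk+s$ with $0 \le s < r$, so that $\T_r(n)$ has $s$ parts of size $k+1$ and $r-s$ parts of size $k$. Since a $u$-clique has exactly one vertex in each of $u$ distinct parts, deleting $d$ removes one vertex from each of $u$ largest parts. If $u \le s$ this turns $u$ of the $(k+1)$-parts into $k$-parts, leaving $s-u$ parts of size $k+1$ and $r-s+u$ of size $k$; if $u > s$ it turns all $s$ of the $(k+1)$-parts into $k$-parts and, in addition, $u-s$ of the $k$-parts into $(k-1)$-parts. In each case the resulting multiset of part sizes is precisely that of $\T_r(n-u)$, and since a complete multipartite graph is determined up to isomorphism by its multiset of part sizes, $\T_r(n)-d \cong \T_r(n-u)$.

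For (i), the key observation is that $\T_r(n)-c$ is an induced subgraph of the $r$-partite graph $\T_r(n)$, hence is itself $r$-partite and therefore $K_{r+1}$-free, and it has exactly $n-u$ vertices. Because $r \ge \w_0(H)$, \cref{thm:turangood} applies with $\w = r$ at $n-u$ vertices and gives $\N(H,\T_r(n)-c) \le \ex(n-u,H,K_{r+1}) = \N(H,\T_r(n-u))$. Combining this with (ii) yields
\[
  \N(H,\T_r(n)) - \N(H,\T_r(n)-c) \ge \N(H,\T_r(n)) - \N(H,\T_r(n-u)) = \N(H,\T_r(n)) - \N(H,\T_r(n)-d),
\]
which is exactly the asserted inequality between the number of copies of $H$ meeting $c$ (left side) and meeting $d$ (right side), together with the claimed value of the latter.

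I do not expect a genuine obstacle, as the argument is a clean reduction; the only point requiring care is the case $u > s$ in (ii), where the removal reaches below the largest parts and creates parts of size $k-1$, and one must verify this still reproduces the part-size profile of $\T_r(n-u)$. The conceptual crux is recognizing that (i) is precisely the extremal content of \cref{thm:turangood}: among $K_{r+1}$-free graphs on $n-u$ vertices the balanced Tur\'an graph $\T_r(n-u)$ maximizes $\N(H,\cdot)$, so deleting $d$ (which produces this balanced graph) leaves the most surviving copies of $H$ and hence the fewest copies meeting the deleted clique.
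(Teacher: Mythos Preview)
Your proposal is correct and follows essentially the same approach as the paper: complementary counting reduces the question to bounding $\N(H,\T_r(n)-c)$, and \cref{thm:turangood} supplies the bound $\N(H,\T_r(n-u))$ since $\T_r(n)-c$ is $K_{r+1}$-free on $n-u$ vertices. The paper's proof is terser in that it simply asserts equality when $c$ lies in $u$ largest parts, whereas you explicitly verify the isomorphism $\T_r(n)-d \cong \T_r(n-u)$ by tracking part sizes (including the $u>s$ case); this is a welcome elaboration but not a different argument.
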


\begin{proof}
    Let $c$ be an arbitrary $u$-clique in $\T_r(n)$. The set of copies of $H$ in $\T_r(n)$ can be partitioned into those that contain no vertices of $c$ and those that contain at least one vertex of $c$. The former set has size $\N(H,K)$, where $K$ is the graph resulting from deleting $c$ from $\T_r(n)$. Since $K$ is $K_{r+1}$-free on $n-u$ vertices, and $r \ge \w_0(H)$, by \cref{thm:turangood} we have $\N(H,K) \le \N(H,\T_r(n-u))$. Therefore the number of copies of $H$ in $\T_r(n)$ containing at least one vertex of $c$ is $\N(H,\T_r(n))-\N(H,K) \ge \N(H,\T_r(n))-\N(H,\T_r(n-u))$, and equality is achieved when the vertices of $c$ lie in $u$ largest parts of $\T_r(n)$.
\end{proof}

The following corollary is the $u=1$ case of \cref{prop:turancount}.
\begin{cor}\label{cor:turancount}
    Let $H$ be a graph, $r \ge \w_0(H)$, and $n \ge 1$. Let $x$ be a vertex in a larger part of $\T_r(n)$, and let $v$ be a vertex in a smaller part. Then the number of copies of $H$ in $\T_r(n)$ containing $v$ is at least the number containing $x$, which is $\N(H,\T_r(n)) - \N(H,\T_r(n-1))$.
\end{cor}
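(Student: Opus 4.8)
The plan is to derive \cref{cor:turancount} as the special case $u=1$ of \cref{prop:turancount}, which has just been proved. Setting $u=1$, the $u$-clique $d$ whose single vertex lies in ``$u$ largest parts'' becomes a single vertex $x$ lying in one of the largest parts of $\T_r(n)$, and the arbitrary $u$-clique $c$ becomes an arbitrary single vertex. Thus \cref{prop:turancount} with $u=1$ states exactly that the number of copies of $H$ containing an arbitrary vertex $c$ is at least the number containing $x$ (a vertex in a largest part), and that this latter count equals $\N(H,\T_r(n)) - \N(H,\T_r(n-1))$.

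First I would invoke \cref{prop:turancount} with $u=1$ and identify $v$ with the arbitrary vertex $c$ and $x$ with the vertex in a largest part. The only subtlety is to reconcile the phrasing: the corollary speaks of ``a vertex $v$ in a smaller part'' and ``a vertex $x$ in a larger part,'' whereas \cref{prop:turancount} phrases the minimizer as lying in ``$u$ largest parts.'' For $u=1$ these agree because, by the symmetry of the Tur\'an graph, the number of copies of $H$ through a vertex depends only on the size of the part containing that vertex, and a vertex in a smaller part lies in more copies (its part has fewer co-part vertices excluded, so its common-neighborhood structure is richer). So I would either state this monotonicity directly or simply note that \cref{prop:turancount} already gives the inequality for any $c$, and specializing $c$ to $v$ and using that $x$ sits in a largest part yields the claim.

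The cleanest route is to write that the corollary is immediate from \cref{prop:turancount}: applying it with $u=1$ gives that for any vertex $c$, the number of copies of $H$ containing $c$ is at least the number containing a vertex $x$ in a largest part, which equals $\N(H,\T_r(n)) - \N(H,\T_r(n-1))$; taking $c=v$ gives the stated inequality. I would not re-derive the counting identity, since the partition-of-copies argument in \cref{prop:turancount} already establishes that deleting a single vertex $x$ from a largest part leaves a graph isomorphic to $\T_r(n-1)$, giving the exact value.

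I expect essentially no obstacle here, since this is a direct specialization. The one point requiring a sentence of care is confirming the direction of the monotonicity (vertices in smaller parts lie in at least as many copies as vertices in larger parts), which follows because \cref{prop:turancount} identifies the \emph{minimum} over all vertices as being attained by a vertex in a largest part; hence any other vertex, in particular one in a smaller part, lies in at least that many copies. I would present the proof in one or two lines.

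\begin{proof}
    This is the special case $u=1$ of \cref{prop:turancount}. Applying that lemma with $u=1$, the clique $d$ is a single vertex $x$ lying in a largest part of $\T_r(n)$, and the arbitrary clique $c$ is a single vertex. The lemma states that the number of copies of $H$ containing $c$ is at least the number containing $x$, which equals $\N(H,\T_r(n)) - \N(H,\T_r(n-1))$. In particular, the minimum over all choices of $c$ is attained when $c=x$ lies in a largest part, so a vertex $v$ in a smaller part lies in at least as many copies of $H$ as the vertex $x$. Taking $c=v$ yields the claim.
\end{proof}
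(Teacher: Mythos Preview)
Your proposal is correct and takes the same approach as the paper, which simply states that \cref{cor:turancount} is the $u=1$ case of \cref{prop:turancount} and gives no further proof. Your extra sentence reconciling ``larger part'' with ``largest part'' is fine (in a Tur\'an graph the part sizes differ by at most one, so these coincide), but even that is more than the paper writes.
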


Now we use \cref{cor:turancount} to determine a limit that we need for \cref{thm:rhouasymp}.

\begin{lem}\label{lem:ulimit}
    Let $H$ be a graph, $u\ge 1$, and $r \ge \w_0(H)$. 
    Then 
    \[
        \lim_{n\to\infty}\frac{\N(H,\T_{r}(n-u))}{\N(H,\T_{r}(n))} = 1.
    \]
\end{lem}

\begin{proof}
    We address the $u=1$ case first. 
    Let $S = \set{(v,J) : J \in \cH(\T_r(n)), v \in V(J)}$. Counting $\abs{S}$ by choosing the vertex first, by \cref{cor:turancount} we have
        $\abs{S} \ge n(\N(H,\T_r(n))-\N(H,\T_r(n-1)))$. 
    Counting $\abs{S}$ by choosing the copy $J$ of $H$ first, $\abs{S}= \N(H,\T_r(n))\abs{V(H)}$. 
    Therefore
    \begin{align*}
        \N(H,\T_r(n))\abs{V(H)} &\ge n(\N(H,\T_r(n))-\N(H,\T_r(n-1)))\\
        \frac{\abs{V(H)}}{n} &\ge 1 - \frac{\N(H,\T_r(n-1))}{\N(H,\T_r(n))}\\
        1 \ge \frac{\N(H,\T_r(n-1))}{\N(H,\T_r(n))} &\ge 1 - \frac{\abs{V(H)}}{n}.
    \end{align*}
    As $\abs{V(H)}$ does not depend on $n$, we have $\displaystyle \lim_{n\to\infty}\frac{\N(H,\T_r(n-1))}{\N(H,\T_r(n))} = 1$.
    Then, for any positive integer $u$,
    \[
    \lim_{n\to\infty} \frac{\N(H,\T_r(n-u))}{\N(H,\T_r(n))}
        =\lim_{n\to\infty} \prod_{i=0}^{u-1} \frac{\N(H,\T_r(n-i-1))}{\N(H,\T_r(n-i))}
        = \prod_{i=0}^{u-1} \lim_{n\to\infty}\frac{\N(H,\T_r(n-i-1))}{\N(H,\T_r(n-i))}
        = 1,
    \]
    where the first equality is a telescoping product.
\end{proof}

\cref{thm:rhouasymp} determines $P_u(H,\D,\w)$ asymptotically.

\begin{thm}\label{thm:rhouasymp}
Let $u\ge 1$ be an integer, let $H$ be a graph with $\dom(H) \ge u$, and let $\w \ge \w_0(H^{\downarrow u})+u$. Then 
\[
P_u(H, \D, \w) = (1+o_{\D}(1))\rho_u(H,L).
\]
\end{thm}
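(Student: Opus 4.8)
The plan is to trap $P_u(H,\D,\w)$ between the two bounds already in hand and show the gap closes as $\D\to\infty$. By \cref{prop:lowerboundrhou} and \cref{thm:rhou},
\[
\rho_u(H,L) \le P_u(H,\D,\w) \le \frac{\N(H^{\downarrow u},\T_{\w-u}(\D))}{\binom{\dom(H)}{u}} =: U,
\]
so it suffices to prove the reverse-direction estimate $\rho_u(H,L) \ge (1-o_{\D}(1))U$. Indeed, once we have this, the chain $\rho_u(H,L) \le P_u(H,\D,\w) \le U = (1+o_{\D}(1))\rho_u(H,L)$ yields the claimed equality.

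To bound $\rho_u(H,L)$ from below I would apply the two-way count from the proof of \cref{thm:rhou} directly to $G=L$, namely
\[
\binom{\dom(H)}{u}\N(H,L) = \sum_{c \in \K[u](L)}\N(H^{\downarrow u},L[N(c)]).
\]
The key structural observation is the following. Writing $L=\T_{\w}(a\w+b)$ with $\D = a(\w-u)+b$ and $0 \le b \le \w-u-1$, the graph $L$ has $b$ parts of size $a+1$ and $\w-b$ parts of size $a$. For any $u$-clique $c$, the set $N(c)$ is the union of the $\w-u$ parts disjoint from $c$, whose sizes differ by at most one; hence $L[N(c)]$ is a balanced complete $(\w-u)$-partite graph, i.e. $L[N(c)] \cong \T_{\w-u}(\D-j_c)$, where $j_c \in \{0,1,\dots,u\}$ is the number of size-$(a+1)$ parts meeting $c$. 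In particular $\D-u \le \abs{N(c)} \le \D$ uniformly over all $u$-cliques $c$.

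Now \cref{lem:ulimit}, applied with $H^{\downarrow u}$ in place of $H$, with $r=\w-u \ge \w_0(H^{\downarrow u})$, and with shift $j$, gives $\N(H^{\downarrow u},\T_{\w-u}(\D-j)) = (1-o_{\D}(1))\N(H^{\downarrow u},\T_{\w-u}(\D))$ for each fixed $j$. Since only the $u+1$ values $j\in\{0,\dots,u\}$ occur, the error term may be taken uniform in $j$, so every summand satisfies $\N(H^{\downarrow u},L[N(c)]) \ge (1-o_{\D}(1))\N(H^{\downarrow u},\T_{\w-u}(\D))$. Summing over all $k^u(L)$ cliques $c$,
\[
\binom{\dom(H)}{u}\N(H,L) \ge (1-o_{\D}(1))\,k^u(L)\,\N(H^{\downarrow u},\T_{\w-u}(\D)),
\]
which is exactly $\rho_u(H,L) \ge (1-o_{\D}(1))U$, completing the argument.

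The point needing care — and the main obstacle — is the uniformity of the error term: distinct $u$-cliques $c$ can produce distinct values $j_c$, so I must know that $\N(H^{\downarrow u},\T_{\w-u}(\D-j))/\N(H^{\downarrow u},\T_{\w-u}(\D)) \to 1$ at a rate independent of which $j\le u$ occurs. This is legitimate precisely because $j$ ranges over the fixed finite set $\{0,\dots,u\}$, with $u$ not growing with $\D$, so the worst of finitely many $o_{\D}(1)$ terms is again $o_{\D}(1)$. It is equally essential that $\w$ (and hence $u$ and $\w-u$) is held fixed as $\D\to\infty$: this keeps the hypothesis $\w-u \ge \w_0(H^{\downarrow u})$ of \cref{lem:ulimit} in force and ensures $a=\floor{\D/(\w-u)}\to\infty$, so that the limit genuinely applies.
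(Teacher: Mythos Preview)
Your proof is correct and follows essentially the same approach as the paper: sandwich $P_u(H,\D,\w)$ between $\rho_u(H,L)$ and $U$, expand $\binom{\dom(H)}{u}\N(H,L)$ via the two-way count over $u$-cliques $c$ of $L$, identify $L[N(c)]\cong \T_{\w-u}(\D-j_c)$, and invoke \cref{lem:ulimit}. The only cosmetic difference is that the paper sidesteps your uniformity discussion by using the single crude containment $L[N(c)]\supseteq \T_{\w-u}(\D-u)$ for every $c$, so that all summands share the common lower bound $\N(H^{\downarrow u},\T_{\w-u}(\D-u))$ and \cref{lem:ulimit} need only be applied once with shift $u$.
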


\begin{proof}
    By \cref{prop:lowerboundrhou} and \cref{thm:rhou}, 
    \[
    \frac{\N(H,L)}{k^u(L)}=\rho_u(H,L) \le P_u(H,\D,\w) \le \frac{\N(H^{\downarrow u},\T_{\w-u}(\D))}{\binom{\dom(H)}{u}}.
    \]
    It is enough to show that the limit as $\D$ increases without bound of the ratio of the lower bound to the upper bound is $1$.

    Let $c$ be a $u$-clique in $L = \T_{\w}(\D+ua)$ by \cref{eq:divisionedgesclique}. Then $L[N(c)] = \T_{\w-u}(\D+ua-x)$, where $x$ is the total number of vertices in the $u$ parts containing a vertex of $c$, so $x \le (a+1)u$, and $L[N(c)] \supseteq T_{\w-u}(\D-u)$. Therefore,
    \begin{align*}
        \binom{\dom(H)}{u}\cdot \N(H,L) &= \sum_{c \in \K[u](L)}\N_c(H,L)\\
        &= \sum_{c \in \K[u](L)} \N(H^{\downarrow u},L[N(c)])\\
        &\ge \sum_{c \in \K[u](L)} \N(H^{\downarrow u},\T_{w-u}(\D-u))= k^u(L)\N(H^{\downarrow u},\T_{w-u}(\D-u)),
    \end{align*}
    so \[\rho_u(H,L) = \frac{\N(H,L)}{k^u(L)} \ge \frac{\N(H^{\downarrow u},\T_{w-u}(\D-u))}{\binom{\dom(H)}{u}}.\] Then the limit of the ratio of the lower bound to the upper bound is 
    \[
        1 \ge \lim_{\D\to\infty}\frac{\N(H,L)\binom{\dom(H)}{u}}{k^u(L)\N(H^{\downarrow u},\T_{\w-u}(\D))} 
        \ge \lim_{\D\to\infty} \frac{\N(H^{\downarrow u},\T_{w-u}(\D-u))\binom{\dom(H)}{u}}{\binom{\dom(H)}{u}\N(H^{\downarrow u},\T_{\w-u}(\D))}=1
    \]
    by \cref{lem:ulimit} (substituting $H^{\downarrow u}$ for $H$, $\w-u$ for $r$, and $\D$ for $n$).
\end{proof}

\section{Localization}\label{sec:local}

Localization of extremal combinatorics theorems was introduced in \cite{Bradac, MT23} and extended to generalized Tur\'{a}n problems in \cite{AS24,KN24}. 
The tight upper bound on $\ex_u(p,H,\set{K_u \vee I_{\D+1}, K_{\w+1}})$ from \cref{thm:rhou} can be localized as follows to obtain a tight inequality involving two weight functions, $\w(J)$ and $\D(J)$, giving the sizes of the locally largest cliques and complete split graphs containing a set of $u$ dominating vertices of each copy $J$ of $H$.

\begin{defn}
Let $u \ge 1$ be an integer, and let $H$ be a graph with $\dom(H) \ge u$. 

For a graph $G$ and a $u$-clique $c$ of $G$, let $\w(c)$ be the size of a largest clique of $G$ containing $c$, i.e., $\w(c)=\max\set{|S| : c \subseteq S \subseteq V(G) \text{ and } G[S]\cong K_{|S|}}$. For $J \in \cH(G)$, we define $\w(J) = \max\set{\w(c) : c \in \binom{\Dom(J)}{u}}$, the maximum value of $\w(c)$ over all $u$-cliques $c$ consisting of dominating vertices of $J$.

We define $\D(c)$ as the number of common neighbors of $c$ in $G$, i.e., $\D(c) = \abs{N(c)} = \abs{\cap_{v \in c}N(v)}$, and $\D(J) = \max\set{\D(c): c \in \binom{\Dom(J)}{u}}$, the maximum value of $\D(c)$ over all $u$-cliques $c$ consisting of dominating vertices of $J$.
\end{defn}

Notice that $\w(c) \ge u$, so $\w(c)$ and $\w(J)$ are well-defined. The definition of $\w(J)$ is necessarily different from the definition of $\alpha_G(T)$ given in \cite[Theorem 4]{KN24} (the size of a largest clique containing the clique $T$) because $H$ may not be a clique and hence $J$ may not be contained in a clique of any size in $G$.

\begin{prop}\label{prop:local}
    Let $u \ge 1$ be an integer, and let $H$ be a graph with $\dom(H) \ge u$. In a graph $G$, for every pair $(c,J)$, where $J \in \cH(G)$ and $c \in \binom{\Dom(J)}{u}$, we have 
    \begin{enumerate}
        \item $\w(c) \le \w(J)$,
        \item $\D(c) \le \D(J)$,
        \item $\D(c) \ge \w(c)-u$, and
        \item $\D(J) \ge \w(J)-u$.
    \end{enumerate}
\end{prop}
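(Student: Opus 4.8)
The plan is to handle the four inequalities in the order presented, since the first two are essentially immediate from the definitions of $\w(J)$ and $\D(J)$ as maxima, while the third is the only one requiring a genuine (if short) argument, and the fourth combines the third with the second.

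First I would dispatch items (1) and (2) together. Both $\w(J)$ and $\D(J)$ are defined as maxima of $\w(c')$ and $\D(c')$ respectively, ranging over all $c' \in \binom{\Dom(J)}{u}$. Since the given pair $(c,J)$ satisfies $c \in \binom{\Dom(J)}{u}$, the value $\w(c)$ is one of the terms over which the maximum defining $\w(J)$ is taken, so $\w(c) \le \w(J)$; identically, $\D(c) \le \D(J)$. No case analysis is needed.

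The substantive step is item (3), the inequality $\D(c) \ge \w(c) - u$. Here I would let $S$ be a largest clique of $G$ containing $c$, so that $|S| = \w(c)$ by definition. The key observation is that every vertex of $S \setminus c$ lies in $N(c)$: since $S$ is a clique, each vertex of $S \setminus c$ is adjacent to all $u$ vertices of $c$, hence is a common neighbor of $c$. As $c \subseteq S$ and the vertices of $c$ are distinct from those of $S \setminus c$, we get $\D(c) = \abs{N(c)} \ge \abs{S \setminus c} = \abs{S} - u = \w(c) - u$. The one point to state cleanly is that $S \setminus c$ and $N(c)$ are genuinely related this way and that $|S \setminus c| = |S| - u$ because $c \subseteq S$ has exactly $u$ vertices.

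Finally, for item (4) I would choose a $u$-clique $c^* \in \binom{\Dom(J)}{u}$ realizing the maximum, so that $\w(c^*) = \w(J)$. Applying item (3) to $c^*$ gives $\D(c^*) \ge \w(c^*) - u = \w(J) - u$, and applying item (2) to $c^*$ gives $\D(J) \ge \D(c^*)$, whence $\D(J) \ge \w(J) - u$ by transitivity. There is no real obstacle in this proposition; if anything, the only place to be careful is ensuring in item (3) that the ``extra'' clique vertices are counted as common neighbors and are disjoint from $c$, so that the count $\w(c) - u$ comes out exactly rather than off by the size of $c$.
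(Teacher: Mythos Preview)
Your proof is correct and essentially identical to the paper's: items (1) and (2) follow from the definitions as maxima, item (3) from observing that the non-$c$ vertices of a largest clique containing $c$ lie in $N(c)$, and item (4) by combining (3) with the maximum. The only cosmetic difference is that for item (4) the paper takes the max of both sides of (3) over all $c \in \binom{\Dom(J)}{u}$ in one line, whereas you pick a specific maximizer $c^*$; these are equivalent.
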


\begin{proof}
    The first two statements hold because by definition $\w(J)$ and $\D(J)$ are the maximums over sets containing $\w(c)$ and $\D(c)$, respectively. For each $u$-clique $c$, we have $\D(c) \ge \w(c)-u$ because every vertex in a clique containing $c$ is in the common neighborhood of $c$, so also $\D(J) = \max\set{\D(c): c \in \binom{\Dom(J)}{u}} \ge \max\set{\w(c)-u: c \in \binom{\Dom(J)}{u}} = \w(J)-u$ for every $J \in \cH(G)$.
\end{proof}

The following proposition is used in the proof of \cref{thm:local}.
\begin{prop}\label{prop:turancounts}
    Let $H$ be a graph and $n$, $r$, and $s$ be positive integers. If $r \ge w_0(H)$ and $r \ge s$ then $\N(H,\T_r(n)) \ge \N(H,\T_s(n))$.
\end{prop}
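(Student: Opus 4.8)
The plan is to deduce this inequality directly from the extremality of Turán graphs established in \cref{thm:turangood}. The key observation is a clique-freeness reduction: the graph $\T_s(n)$, being $s$-partite, is $K_{s+1}$-free, and since $s \le r$ it is therefore also $K_{r+1}$-free. Indeed, any copy of $K_{r+1}$ contains a copy of $K_{s+1}$ as a subgraph, so the absence of $K_{s+1}$ in $\T_s(n)$ forces the absence of $K_{r+1}$.

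With this in hand, I would apply \cref{thm:turangood} with $\w = r$. Since $r \ge \w_0(H)$ by hypothesis, that theorem asserts that $\T_r(n)$ maximizes the number of copies of $H$ among all $n$-vertex $K_{r+1}$-free graphs, i.e. $\N(H,\T_r(n)) = \ex(n,H,K_{r+1})$. Because $\T_s(n)$ is an $n$-vertex $K_{r+1}$-free graph by the reduction above, it is one of the competitors in this maximization, so $\N(H,\T_s(n)) \le \ex(n,H,K_{r+1}) = \N(H,\T_r(n))$, which is exactly the claimed inequality.

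There is essentially no obstacle here, since the entire combinatorial content is packaged into \cref{thm:turangood}; the only thing to verify is the implication ``$K_{s+1}$-free $\Rightarrow$ $K_{r+1}$-free'' for $s \le r$, which is immediate. It is worth noting that the two hypotheses play complementary roles: $r \ge \w_0(H)$ is precisely what licenses the invocation of \cref{thm:turangood}, while $r \ge s$ is precisely what makes $\T_s(n)$ an admissible competitor graph in that theorem's maximization. The conclusion holds for all $n \ge 1$, since \cref{thm:turangood} gives the equality $\ex(n,H,K_{r+1}) = \N(H,\T_r(n))$ for every $n$ once $r \ge \w_0(H)$.
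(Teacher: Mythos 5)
Your proposal is correct and follows exactly the paper's own argument: invoke \cref{thm:turangood} with $\w = r$ to identify $\N(H,\T_r(n))$ with $\ex(n,H,K_{r+1})$, then note that $\T_s(n)$ is an $n$-vertex $K_{r+1}$-free competitor since $s \le r$. The only difference is that you spell out the (immediate) implication that $K_{s+1}$-freeness gives $K_{r+1}$-freeness, which the paper leaves implicit.
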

\begin{proof}
    By \cref{thm:turangood}, $\N(H,\T_r(n)) = \ex(n,H,K_{r+1})$, the maximum value of $\N(H,G)$ over all $n$-vertex, $K_{r+1}$-free graphs $G$, and $\T_s(n)$ is an $n$-vertex, $K_{r+1}$-free graph.
\end{proof}

\cref{thm:local} is a localized form of \cref{thm:rhou}.

\begin{thm}\label{thm:local}
    Let $u \ge 1$ be an integer, and let $H$ be a graph with $\dom(H) \ge u$. Let $G$ be a graph in which every $u$-clique $c$ consisting of dominating vertices of a copy of $H$ has $\w(c) \ge \w_0(H^{\downarrow u})+u$. Then $x(J):=\displaystyle\frac{1}{\N(H^{\downarrow u}, \T_{\w(J)-u}(\D(J)))}$ is well-defined and weakly decreasing in both $\w(J)$ and $\D(J)$, and
    \[\displaystyle\sum_{J \in \cH(G)}x(J) \le \frac{k^u(G)}{\binom{\dom(H)}{u}}.\]
    Equality holds for any disjoint union of balanced Tur\'an graphs, $G = \T_{\w_1}(a_1\w_1) \cup \cdots \cup \T_{\w_k}(a_k\w_k)$, with $\w_i \ge \w_0(H^{\downarrow u})+u$ for all $1 \le i \le k$, as well as for $G = \T_{\w_1}(a_1\w_1) \cup \cdots \cup \T_{\w_k}(a_k\w_k)\cup Z$, where $Z$ is any $K_u$-free graph.
\end{thm}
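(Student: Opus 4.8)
The plan is to localize the double-counting argument of \cref{thm:rhou}. There, for a globally $\set{K_u \vee I_{\D+1}, K_{\w+1}}$-free graph $G$, we counted pairs $(c,J)$ with $J \in \cH(G)$ and $c \in \binom{\Dom(J)}{u}$, and bounded $\N(H^{\downarrow u}, G[N(c)])$ by the global quantity $\N(H^{\downarrow u}, \T_{\w-u}(\D))$ via \cref{thm:turangood}. Here I would instead keep the bound local to each $u$-clique $c$. Since any clique of $G[N(c)]$ together with $c$ is a clique of $G$ containing $c$, the graph $G[N(c)]$ is $K_{\w(c)-u+1}$-free, and it has exactly $\D(c)$ vertices; as $\w(c) - u \geq \w_0(H^{\downarrow u})$ by hypothesis, \cref{thm:turangood} yields the sharpened local bound
\[
\N_c(H,G) = \N(H^{\downarrow u}, G[N(c)]) \leq \N(H^{\downarrow u}, \T_{\w(c)-u}(\D(c))).
\]

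Before assembling the sum I would record two monotonicity facts about $\N(H^{\downarrow u}, \T_r(n))$. Monotonicity in $n$ is immediate because $\T_r(n)$ is an induced subgraph of $\T_r(n+1)$, and monotonicity in $r$ (for $r \geq \w_0(H^{\downarrow u})$) is exactly \cref{prop:turancounts}. Since the denominator of $x(J)$ is $\N(H^{\downarrow u}, \T_{\w(J)-u}(\D(J)))$, these facts show at once that $x(J)$ is weakly decreasing in both $\w(J)$ and $\D(J)$. For well-definedness I would pick any $c \in \binom{\Dom(J)}{u}$, which exists since $\dom(H) \geq u$: then $J$ minus the vertices of $c$ is a copy of $H^{\downarrow u}$ inside $G[N(c)]$, so the local bound above gives $\N(H^{\downarrow u}, \T_{\w(c)-u}(\D(c))) \geq 1$, and by the monotonicity together with parts (1) and (2) of \cref{prop:local} the denominator $\N(H^{\downarrow u}, \T_{\w(J)-u}(\D(J)))$ is also at least $1$, hence positive.

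The heart of the argument is a per-clique bound. Fixing $c \in \K[u](G)$ and summing $x(J)$ over all $J \in \cH_c(G)$, I would use parts (1) and (2) of \cref{prop:local} (giving $\w(J) \geq \w(c)$ and $\D(J) \geq \D(c)$) together with the monotonicity above to replace each $x(J)$ by the larger quantity $1/\N(H^{\downarrow u}, \T_{\w(c)-u}(\D(c)))$, obtaining
\[
\sum_{J \in \cH_c(G)} x(J) \leq \frac{\N_c(H,G)}{\N(H^{\downarrow u}, \T_{\w(c)-u}(\D(c)))} \leq 1,
\]
where the final inequality is the local Turán bound displayed in the first paragraph. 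Double counting the pairs $(c,J)$, with each $J$ contributing $\binom{\dom(H)}{u}$ cliques $c$, then yields
\[
\binom{\dom(H)}{u} \sum_{J \in \cH(G)} x(J) = \sum_{c \in \K[u](G)} \sum_{J \in \cH_c(G)} x(J) \leq \sum_{c \in \K[u](G)} 1 = k^u(G),
\]
which is the claimed inequality after dividing by $\binom{\dom(H)}{u}$.

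Finally, for the equality cases I would check that in a balanced Tur\'an component $\T_{\w_i}(a_i \w_i)$ every $u$-clique $c$ satisfies $\w(c) = \w_i$, $\D(c) = a_i(\w_i - u)$, and $G[N(c)] = \T_{\w_i - u}(a_i(\w_i - u))$ exactly; hence both inequalities in the per-clique bound become equalities, the first because $\w(J) = \w_i$ and $\D(J) = a_i(\w_i-u)$ are constant over the component, and the second because $G[N(c)]$ is already the balanced Tur\'an graph. Summing then gives equality overall. Adjoining a $K_u$-free graph $Z$ changes nothing, since $Z$ contains no $u$-clique and (as $H$ is connected with at least $u$ dominating vertices) no copy of $H$, so neither side is affected. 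I do not expect a serious obstacle; the one point requiring care is the direction of the monotonicity when passing from the copy-indexed weights $x(J)$ to the clique-indexed bound: one must use $\w(J) \geq \w(c)$ and $\D(J) \geq \D(c)$ to \emph{increase} the denominator and thereby \emph{upper} bound $x(J)$, which is exactly the direction that makes the telescoped sum collapse to $1$ per clique.
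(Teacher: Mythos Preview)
Your proposal is correct and follows essentially the same route as the paper: establish the local Tur\'an bound $\N_c(H,G)\le \N(H^{\downarrow u},\T_{\w(c)-u}(\D(c)))$ via \cref{thm:turangood}, record the two monotonicity facts (containment in $n$, \cref{prop:turancounts} in $r$), and then double count pairs $(c,J)$, using \cref{prop:local} to pass from $(\w(J),\D(J))$ down to $(\w(c),\D(c))$ so that each clique contributes at most $1$. Your well-definedness argument---observing that $J$ with $c$ deleted is itself a copy of $H^{\downarrow u}$ inside $G[N(c)]$, so the local bound already forces the denominator to be positive---is a little slicker than the paper's construction of an auxiliary graph $\widehat{H}$, but the two arguments are equivalent in substance; the only small omission is that you should note explicitly (as the paper does) that a $u$-clique $c$ with $\cH_c(G)=\emptyset$ contributes $0$ to the double sum, so no hypothesis on $\w(c)$ is needed there.
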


\begin{proof} 
    Every $J \in \cH(G)$, since $J$ is a copy of $K_u \vee H^{\downarrow u}$ and contains at least one $u$-clique $c$ of dominating vertices of $J$, by part 1 of \cref{prop:local} has $\w(J) \ge \w(c) \ge \w_0(H^{\downarrow u})+u$.
    
    Let $J \in \cH(G)$, and let $c \in \binom{\Dom(J)}{u}$ such that $\w(J) = \w(c)$. By part 2 of \cref{prop:local}, $\D(J) \ge \D(c) \ge \abs{V(H)}-u$, using the fact that $c \subseteq \Dom(J)$. Consider the graph $\widehat{H} := H^{\downarrow u} \cup (\D(J) - \abs{V(H)}+u)K_1$, which is well-defined by the previous sentence and has $\D(J)$ vertices. The graph $\widehat{H}$ also is $K_{\w(J)-u+1}$-free because otherwise, $H^{\downarrow u}$ would contain a $K_{\w(J)-u+1}$, and $c$ would be contained in a $K_{\w(J)+1}=K_{\w(c)+1}$, contradicting the definition of $\w(c)$. Therefore there exists a $\D(J)$-vertex, $K_{\w(J)-u+1}$-free graph, namely $\widehat{H}$, which contains a copy of $H^{\downarrow u}$. Using the fact that $\w(J) \ge \w_0(H^{\downarrow u})+u$ from the previous paragraph and \cref{thm:turangood}, $\N(H^{\downarrow u}, \T_{\w(J)-u}(\D(J))) \ge \N(H^{\downarrow u}, \widehat{H}) \ge 1$. Therefore $x(J)$ is well-defined.

    The function $x(J)$ is weakly decreasing in $\D(J)$ because $\T_{r}(n) \subseteq \T_{r}(n+1)$ for all $n$ and $r$. It is weakly decreasing in $\w(J)$ by \cref{prop:turancounts}, using the fact that $\w(J)-u \ge \w_0(H^{\downarrow u})$ from the first paragraph.
    
    Also note that, for a $u$-clique $c$ in $G$, the induced subgraph on its common neighborhood, $G[N(c)]$, by definition has $\D(c)$ vertices and maximum clique size $\w(c)-u$. For $c \in \binom{\Dom(J)}{u}$ for some $J \in \cH(G)$, we have $\w(c)-u \ge \w_0(H^{\downarrow u})$, so by \cref{thm:turangood}, we have $\N(H^{\downarrow u},G[N(c)]) \le \N(H^{\downarrow u},T_{\w(c)-u}(\D(c)))$. Recall that for a $u$-clique $c$, we write $\cH_c(G)$ for the set of copies $J$ of $H$ in $G$ such that $c \in \binom{\Dom(J)}{u}$. If a $u$-clique $c$ is not a set of dominating vertices of some copy of $H$, then $c$ does not contribute to the sum of $x(J)$'s, and $\N(H^{\downarrow u},G[N(c)]) = \abs{\cH_c(G)} = 0 \le \N(H^{\downarrow u},\T_{\w(c)-u}(\D(c)))$. Therefore for all $c \in \K[u](G)$,
    \begin{equation}\label{eq:local}
        \N(H^{\downarrow u},G[N(c)]) \le \N(H^{\downarrow u},\T_{\w(c)-u}(\D(c))).
    \end{equation}

    Summing $x(J)$ over pairs $(c, J)$ such that $J \in \cH(G)$ and $c \in \binom{\Dom(J)}{u}$ in two ways, we have
    \begin{align*}
        &\binom{\dom(H)}{u}\sum_{J \in \cH(G)}\frac{1}{\N(H^{\downarrow u}, \T_{\w(J)-u}(\D(J)))} \\
        &= \sum_{c \in \K[u](G)} \sum_{J \in \cH_c(G)} \frac{1}{\N(H^{\downarrow u}, \T_{\w(J)-u}(\D(J)))} \\
        &\le \sum_{c \in \K[u](G)} \sum_{J \in \cH_c(G)} \frac{1}{\N(H^{\downarrow u}, \T_{\w(c)-u}(\D(J)))} \quad\text{by Propositions \ref{prop:local} and \ref{prop:turancounts}, and } \w(J)-u \ge \w_0(H^{\downarrow u})\\
        &\le \sum_{c \in \K[u](G)} \sum_{J \in \cH_c(G)} \frac{1}{\N(H^{\downarrow u}, \T_{\w(c)-u}(\D(c)))} \quad\text{by \cref{prop:local}}\\
        &= \sum_{c \in \K[u](G)} \frac{\abs{\cH_c(G)}}{\N(H^{\downarrow u}, \T_{\w(c)-u}(\D(c)))} = \sum_{c \in \K[u](G)} \frac{\N(H^{\downarrow u},G[N(c)])}{\N(H^{\downarrow u}, \T_{\w(c)-u}(\D(c)))} \\
        &\le \sum_{c \in \K[u](G)} 
        \frac{\N(H^{\downarrow u},\T_{\w(c)-u}(\D(c)))}
        {\N(H^{\downarrow u}, \T_{\w(c)-u}(\D(c)))}
        = \sum_{c \in \K[u](G)} 1 = k^u(G) \quad\text{by \cref{eq:local}}.
    \end{align*}
    Suppose $G = \T_{\w_1}(a_1\w_1) \cup \cdots \cup \T_{\w_k}(a_k\w_k)\cup Z$, as in the theorem statement, where $Z$ may be empty. Let $J$ be a copy of $H$ in $G$. Then $J$ is in the $\T_{\w_i}(a_j\w_i)$ component for some $1 \le i \le k$. Every $u$-clique $c$ of dominating vertices of $J$ has the same value of $\w(c) = \w_i$, so $\w(c) = \w(J)$. Also, every $u$-clique $c$ of dominating vertices of $J$ has the same value of $\D(c) = (\w_i-u)a_i$, so $\D(c) = \D(J)$. Finally, every $u$-clique $c$ in $G$ has $G[N(c)] \cong \T_{\w(c)-u}(\D(c))$, using that $Z$ is $K_u$-free. Therefore the three inequalities in the above proof all are equalities, and for this graph
    \[
    \sum_{J \in \cH(G)}\frac{1}{\N(H^{\downarrow u}, \T_{\w(J)-u}(\D(J)))} = \frac{k^u(G)}{\binom{\dom(H)}{u}}.\qedhere
    \]
\end{proof}

We can recover the upper bound $\N(H,G) \le \N(H^{\downarrow u},\T_{\w-u}(\D))p/\binom{\dom(H)}{u}$ from \cref{thm:local} by adding global hypotheses on the sizes of the largest cliques and complete split graphs, but the hypothesis on $\w(c)$ is still needed to use \cref{thm:local}:

\begin{cor}
    Let $u \ge 1$ be an integer, and let $H$ be a graph with $\dom(H) \ge u$. Let $\D \ge \w \ge \w_0(H^{\downarrow u})+u$. Let $G$ be a $\set{K_u\vee I_{\D+1}, K_{\w+1}}$-free graph in which every $u$-clique $c$ consisting of dominating vertices of a copy of $H$ has $\w(c) \ge \w_0(H^{\downarrow u})+u$. Then 
    \[
    \N(H,G) \le \frac{\N(H^{\downarrow u}, \T_{\w-u}(\D))k^u(G)}{\binom{\dom(H)}{u}}.
    \]
\end{cor}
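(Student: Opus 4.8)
The plan is to deduce the corollary directly from \cref{thm:local}, by converting the two global forbidden-subgraph hypotheses into pointwise upper bounds on the weight functions $\w(J)$ and $\D(J)$, and then invoking the monotonicity of $x(J)$ that \cref{thm:local} already provides. First I would verify that \cref{thm:local} applies at all: its standing hypothesis is precisely that every $u$-clique $c$ consisting of dominating vertices of a copy of $H$ satisfies $\w(c) \ge \w_0(H^{\downarrow u})+u$, which is exactly the hypothesis imposed in the corollary. Hence \cref{thm:local} gives $\sum_{J\in\cH(G)} x(J) \le k^u(G)/\binom{\dom(H)}{u}$, where $x(J)=1/\N(H^{\downarrow u},\T_{\w(J)-u}(\D(J)))$ is well-defined and weakly decreasing in both $\w(J)$ and $\D(J)$.

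Next I would translate the global constraints into bounds on the local parameters. Since $G$ is $K_{\w+1}$-free, every clique of $G$ has at most $\w$ vertices, so $\w(c)\le\w$ for every $u$-clique $c$, and therefore $\w(J)=\max\set{\w(c):c\in\binom{\Dom(J)}{u}}\le\w$ for every $J\in\cH(G)$. Likewise, since $G$ is $K_u\vee I_{\D+1}$-free, every $u$-clique $c$ has at most $\D$ common neighbors, so $\D(c)=\abs{N(c)}\le\D$ and thus $\D(J)\le\D$ for every $J$.

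Then, because $x(J)$ is weakly decreasing in each argument and we have just bounded $\w(J)\le\w$ and $\D(J)\le\D$ (both comparisons lying in the range $\w(J)-u\ge\w_0(H^{\downarrow u})$ where the monotonicity is valid), each term satisfies $x(J)\ge 1/\N(H^{\downarrow u},\T_{\w-u}(\D))$. Summing over the $\N(H,G)$ copies $J\in\cH(G)$ yields $\N(H,G)/\N(H^{\downarrow u},\T_{\w-u}(\D)) \le \sum_{J\in\cH(G)} x(J) \le k^u(G)/\binom{\dom(H)}{u}$, and clearing the denominator gives exactly the claimed inequality.

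The one point requiring care—the main obstacle, insofar as there is one—is the correct reading of $K_u\vee I_{\D+1}$-freeness used in the second paragraph: because subgraph containment ignores non-edges, a set of $\D+1$ common neighbors of a $u$-clique $c$, taken together with $c$, already realizes a copy of $K_u\vee I_{\D+1}$ as a subgraph regardless of the edges present among those $\D+1$ vertices. Thus forbidding $K_u\vee I_{\D+1}$ is equivalent to the bound $\D(c)\le\D$ for all $u$-cliques $c$, rather than merely to forbidding large independent sets inside neighborhoods; this is what justifies $\D(J)\le\D$. Everything else is a routine application of the monotonicity and the summation identity established in \cref{thm:local}.
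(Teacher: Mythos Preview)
Your proposal is correct and follows essentially the same route as the paper: verify the hypothesis of \cref{thm:local}, deduce $\w(J)\le\w$ and $\D(J)\le\D$ from the two global forbidden-subgraph assumptions, use the monotonicity of $x(J)$ to bound each term below by $1/\N(H^{\downarrow u},\T_{\w-u}(\D))$, and sum. Your explicit remark that $K_u\vee I_{\D+1}$-freeness (as a not-necessarily-induced subgraph) is exactly the condition $\D(c)\le\D$ is a helpful clarification that the paper leaves implicit.
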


\begin{proof}
    Every $u$-clique $c$ in $G$ has $\w(c) \le \w$ and $\D(c) \le \D$, so every $J \in \cH(G)$ has $\w(J) \le \w$ and $\D(J)\le \D$. 
    By \cref{thm:local}, $x(J)$ is weakly decreasing in both $\w(J)$ and $\D(J)$, so for all $J$ we have $x(J) = 1/\N(H^{\downarrow u}, \T_{\w(J)-u}(\D(J))) \ge 1/\N(H^{\downarrow u}, \T_{\w-u}(\D))$. Then by \cref{thm:local},
    \[
    \frac{\N(H,G)}{\N(H^{\downarrow u}, \T_{\w-u}(\D))}=\sum_{J \in \cH(G)}\frac{1}{\N(H^{\downarrow u}, \T_{\w-u}(\D))} \le \sum_{J \in \cH(G)}x(J) \le \frac{k^u(G)}{\binom{\dom(H)}{u}}.\qedhere
    \]
\end{proof}

In the special case where $H$ is a clique, we can remove the $\w(c)$ hypothesis, and we can use a general formula for the number of copies of $H^{\downarrow u}$ in the Tur\'an graph. 

\begin{cor}
    Let $t \ge u+1 \ge 2$. 
    Every graph $G$ satisfies
    \[
    \sum_{J \in \K[t](G)}\frac{1}{k^{t-u}(\T_{\w(J)-u}(\D(J)))} \le \frac{k^u(G)}{\binom{t}{u}},
    \]
    i.e., defining $a=a(J)$ and $b=b(J)$ by $\D(J) = a(\w(J)-u)+b$ and $0 \le b < \w(J)-u$,
    \[
    \sum_{J \in \K[t](G)}\frac{1}{\sum_{i=0}^b\binom{b}{i}\binom{\w(J)-u-b}{t-u-i} (a+1)^i a^{t-u-i}} \le \frac{k^u(G)}{\binom{t}{u}},
    \]
    Equality holds for any disjoint union of balanced Tur\'an graphs, $G = \T_{\w_1}(a_1\w_1) \cup \cdots \cup \T_{\w_k}(a_k\w_k)$, 
    as well as for $G = \T_{\w_1}(a_1\w_1) \cup \cdots \cup \T_{\w_k}(a_k\w_k)\cup Z$, where $Z$ is any $K_u$-free graph.
\end{cor}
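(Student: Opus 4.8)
The plan is to specialize the proof of \cref{thm:local} to $H = K_t$, for which $H^{\downarrow u} = K_{t-u}$ and $\dom(K_t) = t$, so that $\binom{\dom(H)}{u} = \binom{t}{u}$ and $\N(H^{\downarrow u}, \T_r(n)) = k^{t-u}(\T_r(n))$. The only place where \cref{thm:local} invokes the hypothesis $\w(c) \ge \w_0(H^{\downarrow u}) + u$ is to apply \cref{thm:turangood}, which guarantees that the Tur\'an graph maximizes $\N(H^{\downarrow u}, \cdot)$ among $K_{\w+1}$-free graphs only once $\w$ is large. When $H^{\downarrow u}$ is itself a clique, Zykov's \cref{thm:zykov} supplies exactly this maximization for \emph{every} $\w$, so the hypothesis can be dropped entirely. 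Thus the strategy is to rerun the argument of \cref{thm:local}, substituting \cref{thm:zykov} for \cref{thm:turangood} at each step.

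First I would verify that $x(J) = 1/k^{t-u}(\T_{\w(J)-u}(\D(J)))$ is well-defined without any lower bound on $\w(c)$. For $J \in \K[t](G)$ and $c \in \binom{\Dom(J)}{u}$, the clique $J$ certifies $\w(c) \ge t$, and since the remaining $t-u$ vertices of $J$ lie in $N(c)$, also $\D(c) \ge t-u$; by \cref{prop:local} this gives $\w(J) \ge t$ and $\D(J) \ge t-u$. Hence $\T_{\w(J)-u}(\D(J))$ has at least $t-u$ nonempty parts and contains a $(t-u)$-clique, so its denominator is at least $1$. Monotonicity of $x(J)$ in $\D(J)$ is immediate from $\T_r(n) \subseteq \T_r(n+1)$; monotonicity in $\w(J)$ follows from the clique analogue of \cref{prop:turancounts}, namely $k^{t-u}(\T_r(n)) \ge k^{t-u}(\T_s(n))$ for $r \ge s$, which holds for all $r$ because $\T_s(n)$ is $K_{r+1}$-free and Zykov's theorem applies.

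Next I would carry out the same double count of pairs $(c,J)$ with $J \in \K[t](G)$ and $c \in \binom{\Dom(J)}{u}$ as in \cref{thm:local}. The three successive inequalities there rest on \cref{prop:local}, on the clique version of \cref{prop:turancounts}, and on the clique version of \cref{eq:local}, namely $k^{t-u}(G[N(c)]) \le k^{t-u}(\T_{\w(c)-u}(\D(c)))$; each now holds for all $\w(c)$ by \cref{thm:zykov}, yielding $\sum_{J \in \K[t](G)} x(J) \le k^u(G)/\binom{t}{u}$, the first displayed inequality. The explicit reformulation then reduces to computing $k^{t-u}(\T_r(n))$ when $n = ar + b$ with $0 \le b < r$: the Tur\'an graph has $b$ parts of size $a+1$ and $r-b$ parts of size $a$, and a $(t-u)$-clique is obtained by choosing $i$ of the larger parts and $t-u-i$ of the smaller parts together with one vertex from each, giving $\sum_{i=0}^{b} \binom{b}{i}\binom{r-b}{t-u-i}(a+1)^i a^{t-u-i}$ with $r = \w(J)-u$.

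Finally, the equality analysis transfers unchanged from \cref{thm:local}: for a disjoint union of balanced Tur\'an graphs (optionally together with a $K_u$-free graph $Z$, which contributes no $u$-cliques), every $u$-clique $c$ of dominating vertices of a copy of $K_t$ has $G[N(c)] \cong \T_{\w(c)-u}(\D(c))$, so each inequality in the double count is tight. I expect the main obstacle to be purely one of bookkeeping: confirming that \emph{every} appeal to \cref{thm:turangood} in \cref{thm:local} --- in the well-definedness, in the monotonicity, and in all three inequalities of the double count --- is in fact an appeal only to the clique-maximization property, so that \cref{thm:zykov} may be substituted and the large-$\w$ hypothesis genuinely removed; the clique-counting identity itself is routine.
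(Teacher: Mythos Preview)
Your proposal is correct, and the underlying idea---that Zykov's theorem replaces the need for a large-$\w$ hypothesis when $H^{\downarrow u}$ is a clique---matches the paper. The execution differs: you rerun the entire proof of \cref{thm:local}, substituting \cref{thm:zykov} for \cref{thm:turangood} at each invocation, whereas the paper applies \cref{thm:local} as a black box. The paper simply observes that \cref{thm:zykov} is precisely the statement that $\w_0(K_{t-u}) = 1$, and then notes that every $u$-clique $c$ contained in a copy of $K_t$ automatically satisfies $\w(c) \ge t \ge u+1 = \w_0(K_{t-u}) + u$, so the hypothesis of \cref{thm:local} is met and the corollary follows immediately. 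Your approach has the virtue of being self-contained and of showing explicitly where each piece of the argument uses only the clique-maximization property; the paper's approach is much shorter and highlights that no new work is needed beyond identifying $\w_0(K_{t-u})$. The explicit formula for $k^{t-u}(\T_r(n))$ is handled identically in both.
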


\begin{proof}
    Notice that by \cref{thm:zykov}, $\w_0(K_{t-u}) = 1$. Every $u$-clique $c$ contained in a copy of $K_t$ has $\w(c) \ge t \ge u+1 = \w_0(K_{t-u})+u$, so \cref{thm:local} applies with $H = K_t$, $\dom(H) = t$, $H^{\downarrow u} = K_{t-u}$, and \[
    \N(H^{\downarrow u}, \T_{\w(J)-u}(\D(J))) = k^{t-u}(\T_{\w(J)-u}(\D(J))) =  \sum_{i=0}^b\binom{b}{i}\binom{\w(J)-u-b}{t-u-i} (a+1)^i a^{t-u-i},\]
where $\D(J) = a(\w(J)-u)+b$ and $0 \le b < \w(J)-u$ \cite[Lemma 3.1]{KR20}. 
\end{proof}

\section{Open Problems}\label{sec:open}

These results suggest the following open problems. First, we determined $\ex_u(p,H,\set{K_u\vee I_{\D+1},$ $K_{\w+1}})$ asymptotically when $\D$ is a multiple of $\w-u$, but the exact values are known only when $n$ is a multiple of $\frac{\D\w}{\w-u}$, i.e. when the extremal graphs are disjoint copies of $\T_{\w}(\frac{\D\w}{\w-u})$.

\begin{problem}\label{problem:1}
    For $H$ a graph containing $u$ dominating vertices, $\D$ a multiple of $\w-u$, and $n$ not a multiple of $\frac{\D\w}{\w-u}$, determine the exact value of $\ex_u(p,H,\set{K_u\vee I_{\D+1},K_{\w+1}})$.
\end{problem}

For \cref{problem:1} it would be reasonable to consider, for $u=1$, the graph $a\T_{\w}(\frac{\D\w}{\w-1})\cup \T_{\w}(b)$, where $a$ and $b$ are defined by $n = a\frac{\D\w}{\w-1} + b$ and $0 \le b < \frac{\D\w}{\w-1}$, and for $u=2$, the graph $a\T_{\w}(\frac{\D\w}{\w-2})\cup \CT_{\w}(b)$, where $a$ and $b$ are defined by $m = a\cdot e(\T_{\w}(\frac{\D\w}{\w-2})) + b$ and $0 \le b < e(\T_{\w}(\frac{\D\w}{\w-2}))$, and $\CT_{\w}(b)$ is the $\w$-partite colex Tur\'{a}n graph on $b$ edges (see \cite{Frohmader08}).

Second, when $\D$ is not a multiple of $\w-u$ we proved that there exists $c=P_u(H,\D,\w)$ such that $\ex_u(p,H,\set{K_u\vee I_{\D+1},K_{\w+1}})=(1-o_p(1))cp$ and determined the value of $c$ asymptotically, as $\D$ increases without bound. It would be interesting to find some exact values of $c$. 
\begin{problem}\label{problem:2}
    For $H$ a graph containing $u$ dominating vertices and $\D$ not a multiple of $\w-u$, determine the exact value of $c=P_u(H,\D,\w)$ such that $\ex_u(p,H,\set{K_u\vee I_{\D+1},K_{\w+1}})=(1-o_p(1))cp$.
\end{problem}
Even in the special case where $H=K_t$ and $u=1$, \cref{problem:2} is difficult. In contrast to Theorems \ref{thm:zykov} and \ref{thm:CGLS}, where the same graphs maximize the number of $t$-cliques for every $t$, in \cite{KR20} we showed that for some values of $\D$ and $\w$, different values of $t$ yield different extremal graphs for $P_u(K_t,\D,\w)$ and therefore for $\ex(n,K_t,\set{K_{1,\D+1},K_{\w+1}})$. It could be fruitful to consider the case where $u=1$ and $\D = \w$, which was solved for $H=K_t$ in \cite{KR20}, for other graphs $H$.

Third, throughout the paper we assumed that $H$ contains $u$ dominating vertices, but the definition of $\ex_u(p,H,\set{K_u\vee I_{\D+1},K_{\w+1}})$ requires only that $H$ contain $K_u$ as a subgraph, so some graphs $H$ have not been addressed.
\begin{problem}
    Asymptotically determine the constant $c=P_u(H,\D,\w)$ such that $\ex_u(p,H,\set{K_u\vee I_{\D+1},K_{\w+1}})=(1-o_p(1))cp$ when $H$ is a graph that contains $K_u$ as a subgraph but does not contain $u$ dominating vertices. 
\end{problem}

The following generalized Tur\'an problem seems similar to $\ex_u(p,H,\set{K_u\vee I_{\D+1},K_{\w+1}})$ and would be interesting to address.
\begin{problem}\label{problem:star} Asymptotically determine the constant $c=c_u(H,\D,\w)$ such that \[\ex_u(p,H,\set{K_{1,\D+1},K_{\w+1}}) = (1-o_p(1))cp,\] for $u > 1$ and for an arbitrary graph $H$ having at least $u$ dominating vertices.
\end{problem}
For \cref{problem:star} the methods of the present paper can be used to show that \[\rho_u(H,\T_{\w}(\D+\floor[\Big]{\frac{\D}{\w-1}})) \le c \le \frac{\N(H^{\downarrow u}, \T_{\w-u}(\D-u+1))}{\binom{\dom(H)}{u}},\]but these bounds do not appear to coincide.

\section*{Acknowledgments}

The author thanks JD Nir and Jamie Radcliffe for helpful discussions and is partially supported by Simons Foundation Grant MP-TSM-00002688.

\bibliographystyle{plainurl}
\bibliography{references}

\begin{thebibliography}{10}

\bibitem{AS24}
Lucas Arag\~ao and Victor Souza.
\newblock Localised graph {M}aclaurin inequalities.
\newblock {\em Ann. Comb.}, 28(3):1021--1033, 2024.
\newblock \href {https://doi.org/10.1007/s00026-023-00672-0}
  {\path{doi:10.1007/s00026-023-00672-0}}.

\bibitem{Bradac}
Domagoj {Brada{\v{c}}}.
\newblock {A generalization of Tur{\'a}n's theorem}.
\newblock {\em arXiv e-prints}, page arXiv:2205.08923, May 2022.
\newblock \href {https://arxiv.org/abs/2205.08923} {\path{arXiv:2205.08923}}.

\bibitem{CC21}
Debsoumya Chakraborti and Da~Qi Chen.
\newblock Many cliques with few edges and bounded maximum degree.
\newblock {\em J. Combin. Theory Ser. B}, 151:1--20, 2021.
\newblock \href {https://doi.org/10.1016/j.jctb.2021.05.005}
  {\path{doi:10.1016/j.jctb.2021.05.005}}.

\bibitem{CC24}
Debsoumya Chakraborti and Da~Qi Chen.
\newblock Exact results on generalized {Erd\H{o}s-Gallai} problems.
\newblock {\em European J. Combin.}, 120:Paper No. 103955, 15, 2024.
\newblock \href {https://doi.org/10.1016/j.ejc.2024.103955}
  {\path{doi:10.1016/j.ejc.2024.103955}}.

\bibitem{CD22}
Ting-Wei Chao and Zichao Dong.
\newblock A simple proof of the {G}an-{L}oh-{S}udakov conjecture.
\newblock {\em Electron. J. Combin.}, 29(3):Paper No. 3.59, 4, 2022.
\newblock \href {https://doi.org/10.37236/10972} {\path{doi:10.37236/10972}}.

\bibitem{Chase20}
Zachary Chase.
\newblock The maximum number of triangles in a graph of given maximum degree.
\newblock {\em Adv. Comb.}, pages Paper No. 10, 5, 2020.
\newblock \href {https://doi.org/10.19086/aic.16788}
  {\path{doi:10.19086/aic.16788}}.

\bibitem{Fekete}
M.~Fekete.
\newblock \"uber die {V}erteilung der {W}urzeln bei gewissen algebraischen
  {G}leichungen mit ganzzahligen {K}oeffizienten.
\newblock {\em Math. Z.}, 17(1):228--249, 1923.
\newblock \href {https://doi.org/10.1007/BF01504345}
  {\path{doi:10.1007/BF01504345}}.

\bibitem{Frohmader08}
Andrew Frohmader.
\newblock Face vectors of flag complexes.
\newblock {\em Israel Journal of Mathematics}, 164(1):153--164, March 2008.
\newblock \href {https://doi.org/10.1007/s11856-008-0024-3}
  {\path{doi:10.1007/s11856-008-0024-3}}.

\bibitem{GLS}
Wenying Gan, Po-Shen Loh, and Benny Sudakov.
\newblock Maximizing the number of independent sets of a fixed size.
\newblock {\em Combin. Probab. Comput.}, 24(3):521--527, 2015.
\newblock \href {https://doi.org/10.1017/S0963548314000546}
  {\path{doi:10.1017/S0963548314000546}}.

\bibitem{GP22bipartite}
D.~Gerbner and B~Patkós.
\newblock Generalized {Turán} problems for complete bipartite graphs.
\newblock {\em Graphs Combin.}, 38, 2022.
\newblock \href {https://doi.org/10.1007/s00373-022-02570-3}
  {\path{doi:10.1007/s00373-022-02570-3}}.

\bibitem{H77}
N.~Had\v{z}iivanov.
\newblock Inequalities for graphs.
\newblock {\em C. R. Acad. Bulgare Sci.}, 30(6):793--796, 1977.

\bibitem{KR20}
R.~Kirsch and A.J. Radcliffe.
\newblock Maximizing the density of {{$K_t$}}'s in graphs of bounded degree and
  clique number.
\newblock {\em Discrete Mathematics}, 343(6):111803, June 2020.
\newblock \href {https://doi.org/10.1016/j.disc.2019.111803}
  {\path{doi:10.1016/j.disc.2019.111803}}.

\bibitem{KR23}
R.~Kirsch and J.~Radcliffe.
\newblock Many cliques in bounded-degree hypergraphs.
\newblock {\em SIAM Journal on Discrete Mathematics}, 37(3):1436--1456, 2023.
\newblock \href {https://doi.org/10.1137/22M1507565}
  {\path{doi:10.1137/22M1507565}}.

\bibitem{KN24}
Rachel Kirsch and {JD} Nir.
\newblock A {{Localized Approach}} to {{Generalized Tur{\'a}n Problems}}.
\newblock {\em Electron. J. Combin.}, 31(3):P3.34, September 2024.
\newblock \href {https://doi.org/10.37236/12132} {\path{doi:10.37236/12132}}.

\bibitem{Letzter19}
Shoham Letzter.
\newblock Many {$H$}-copies in graphs with a forbidden tree.
\newblock {\em SIAM Journal on Discrete Mathematics}, 33(4):2360--2368, 2019.
\newblock \href {https://arxiv.org/abs/https://doi.org/10.1137/19M1239052}
  {\path{arXiv:https://doi.org/10.1137/19M1239052}}, \href
  {https://doi.org/10.1137/19M1239052} {\path{doi:10.1137/19M1239052}}.

\bibitem{MT23}
David Malec and Casey Tompkins.
\newblock Localized versions of extremal problems.
\newblock {\em European J. Combin.}, 112:Paper No. 103715, 11, 2023.
\newblock \href {https://doi.org/10.1016/j.ejc.2023.103715}
  {\path{doi:10.1016/j.ejc.2023.103715}}.

\bibitem{MNNRW23}
Natasha {Morrison}, JD~{Nir}, Sergey {Norin}, Pawe{\l}
  {Rz\textogonekcentered{a}$\dot{z}$ewski}, and Alexandra {Wesolek}.
\newblock Every graph is eventually {{Tur\'an-good}}.
\newblock {\em Journal of Combinatorial Theory, Series B}, 162:231--243,
  September 2023.
\newblock \href {https://doi.org/10.1016/j.jctb.2023.06.004}
  {\path{doi:10.1016/j.jctb.2023.06.004}}.

\bibitem{RU18}
Jamie {Radcliffe} and Andrew {Uzzell}.
\newblock {Stability and Erd{\H{o}}s--Stone type results for $F$-free graphs
  with a fixed number of edges}.
\newblock {\em arXiv e-prints}, page arXiv:1810.04746, October 2018.
\newblock \href {https://arxiv.org/abs/1810.04746} {\path{arXiv:1810.04746}}.

\bibitem{Sachs63}
H.~Sachs.
\newblock Regular graphs with given girth and restricted circuits.
\newblock {\em Journal of the London Mathematical Society}, s1-38(1):423--429,
  1963.
\newblock URL:
  \url{https://londmathsoc.onlinelibrary.wiley.com/doi/abs/10.1112/jlms/s1-38.1.423},
  \href
  {https://arxiv.org/abs/https://londmathsoc.onlinelibrary.wiley.com/doi/pdf/10.1112/jlms/s1-38.1.423}
  {\path{arXiv:https://londmathsoc.onlinelibrary.wiley.com/doi/pdf/10.1112/jlms/s1-38.1.423}},
  \href {https://doi.org/10.1112/jlms/s1-38.1.423}
  {\path{doi:10.1112/jlms/s1-38.1.423}}.

\bibitem{turan}
P.~Tur\'{a}n.
\newblock On an extremal problem in graph theory.
\newblock {\em Matematikai és Fizikai Lapok}, 48:436--452, 1941.

\bibitem{WXZZ25}
Yan {Wang}, Yue {Xu}, Jiasheng {Zeng}, and Xiao-Dong {Zhang}.
\newblock {Generalized Turan number with given size}.
\newblock {\em arXiv e-prints}, page arXiv:2508.00483, August 2025.
\newblock \href {https://arxiv.org/abs/2508.00483} {\path{arXiv:2508.00483}},
  \href {https://doi.org/10.48550/arXiv.2508.00483}
  {\path{doi:10.48550/arXiv.2508.00483}}.

\bibitem{Wood07}
David~R Wood.
\newblock On the maximum number of cliques in a graph.
\newblock {\em Graphs and Combinatorics}, 23(3):337--352, 2007.

\bibitem{Zykov}
A.~A. Zykov.
\newblock On some properties of linear complexes.
\newblock {\em Mat. Sbornik N.S.}, 24/66:163--188, 1949.

\end{thebibliography}
\end{document}